\def\a{{\alpha}}
\def\g{{\gamma}}
\def\d{{\delta}}
\def\D{{\Delta}}
\def\k{{\kappa}}
\def\l{{\lambda}}
\def\m{{\mu}}
\def\n{{\nu}}
\def\s{{\sigma}}
\def\S{{\Sigma}}
\def\1{{\mathds{1}}}
\def\C{{\mathds{C}}}
\def\E{{\mathds{E}}}
\def\P{{\mathds{P}}}
\def\Q{{\mathds{Q}}}
\def\R{{\mathds{R}}}
\def\bN{{\mathbf N}}
\def\dm{{\mathrm{d}}}
\def\Cov{{\C}{\rm ov}}
\def\Var{{\mathds V}}
\newcommand{\cC}{\mathcal{C}}
\newcommand{\cF}{\mathcal{F}}
\newcommand{\cP}{\mathcal{P}}
\newcommand{\cR}{\mathcal{R}}
\newcommand{\cS}{\mathcal{S}}
\newcommand{\cU}{\mathcal{U}}
\newcommand{\cV}{\mathcal{V}}
\newtheorem{theorem}{Theorem}[section]
\newtheorem{corollary}[theorem]{Corollary}
\theoremstyle{definition}
\newtheorem{definition}{Definition}[section]
\theoremstyle{remark}
\author{
Grace Akinwande
\thanks{Institut f\"ur Mathematik,
Universit\"at Osnabr\"uck, Germany, gakinwande@uni-osnabrueck.de}
\thanks{supported by the German Research Foundation DFG-GRK 1916}
\and 
Matthias Reitzner
\thanks{Institut f\"ur Mathematik,
Universit\"at Osnabr\"uck, Germany, matthias.reitzner@uni-osnabrueck.de}
}
\title{\textbf{Multivariate Central Limit Theorems for Random Simplicial Complexes}}
\date{}
\begin{document}
\maketitle

\section*{Abstract}
Consider a Poisson point process within a convex set in a Euclidean space. The Vietoris-Rips complex is the clique complex over the graph connecting all pairs of points with distance at most  $\d$. Summing powers of the volume of all $k$-dimensional faces defines the volume-power functionals of these random simplicial complexes. The asymptotic behavior of the volume-power functionals of the Vietoris-Rips complex is investigated as the intensity of the underlying Poisson point process tends to infinity and the distance parameter goes to zero. Univariate and multivariate central limit theorems are proven.
Analogous results for the \v{C}ech complex are given.



\section{Introduction}
Random simplicial complexes have been an extensively studied object within the last decade. In principle it started in the 60's when the random combinatorial Erd\"os-R\'enyi graphs and the random geometric Gilbert graph were defined by Erd\"os, R\'enyi and Gilbert. They obtained considerable interest and subgraph counts have been at the core of investigations from the very beginning. Clearly, clique counts are covered by these results. Yet a systematic study of the simplicial complexes built over random graphs mainly occurred in the last 20 years. 
In most cases the combinatorial structure of the complexes have been investigated, e.g. the $\bm f$-vector of the Vietoris-Rips complex over the Gilbert graph which is precisely the clique count, and only few results dealt with metric properties. 
In this paper we prove multivariate central limit theorems for volume-power functionals of the Vietoris-Rips and \v{C}ech complexes. As a special case we obtain a multivariate central limit theorem for the $\bm f$-vector of these complexes.

\bigskip
Assume that $W\subset \R^d$ is a compact convex set of volume $1$, and  that $\eta_t$ is a Poisson point process on $W \subset \R^d$ with constant intensity $t>0$. Let $\d_t$ be positive real numbers depending on $t$ such that $\d_t \to 0$ as $t \to \infty$.
The \emph{Vietoris-Rips complex} $\cR (\eta_t, \d_t)  $ takes the points of $\eta_t$ as its vertices, and a $(k+1)$-tuple of points $\{x_0, \dots, x_k\}$ in $\eta_t$ as $k$-simplex if all pairwise distances satisfy 
\begin{equation*}\label{def:dist}
\|x_i-x_j\|\leq\d_t .
\end{equation*}
The well known Gilbert graph is the one-skeleton of the Vietoris-Rips complex, and the Vietoris-Rips complex is just the clique complex of the Gilbert graph. For a thorough analysis of the Gilbert graph we refer to the seminal book by Penrose \cite{P03}. At this time, and with some of the main steps in this book, the modern limit theory for the Gilbert graph started. In particular, Penrose and his co-authors introduced Stein's method to this problem which yielded several important limit theorems.

Another natural simplicial complex over $\eta_t$ is the \emph{\v{C}ech complex} $\cC(\eta_t,\d_t)$,
whose $k$-simplices are all subsets
$\{x_0,\dotsc,x_k\}\subset \eta_t$ admitting a point $y\in \R^d$ with
$\|x_i-y\| \leq \tfrac{\d_t}{2} $ for all $0\leq i\leq k$.

The last years have seen significant developments on questions concerning these random simplicial complexes due to intrinsic mathematical interest and highly important applications. E.g., by the nerve lemma the combinatorics and topology of the \v{C}ech complex $\cC(\eta_t, \d_t)$  and the Boolean model from stochastic geometry, see \cite{SW08}, coincide and hence results on the \v{C}ech complex can be transferred to results on the Boolean model. Further, to study communication networks the Vietoris-Rips complex $\cR(\eta_t, \d_t)$ turned out to be a powerful model. And at last, in recent years \v{C}ech complexes and Vietoris-Rips complexes found highly interesting applications in topological data analysis. For an introduction to this we refer to the surveys on persistent homology by Carlsson \cite{Carl_2009} and Ghrist \cite{Ghrist_2008}. Though there is a large number of applications, to the best of our knowledge the multivariate covariance structure of the $\bm f$-vector of the random simplicial complexes have not been investigated.

\medskip
In this article we are interested in the limiting structure of
$ \cR(\eta_t, \d_t)$ and $\cC(\eta_t,\d_t) $ 
as the intensity $t$ is large and the distance $\delta_t$ is small. 
Denote by $\cF_k(\Delta)$ the set of $k$ faces of a simplicial complex $\Delta$, by $f_k(\Delta)=|\cF_k(\Delta)|$ the number of $k$-dimensional faces of the complex, and by $\bm f=(f_0, \dots, f_n)$ the $\bm f$-vector. For the Vietoris-Rips complex this is just the number of $k$-cliques of the Gilbert graph and thus is well investigated. E.g. expectation, variance, and central limit theorems are contained in Penrose book \cite{P03}. For the \v{C}ech complex expectation, variances and central limit theorems for $f_k(\cC(\eta_t, \d_t))$ are due to Decreusefond et al.~\cite{DFRV_14}. 

The aim of this paper is to analyze the metric behavior of the random simplicial complexes in more detail. 
We investigate the multivariate distributional properties of the volume-power functionals of the Vietoris-Rips complex, defined for $k \leq d$ by
\begin{equation}
\label{eq:eqn1}
\cV_k^{(\a)}:=
\frac{1}{(k+1)!}\sum_{F \in \cF_k(\cR(\eta_t, \d_t))}  \lambda_k(F)^{\a}  ,
\end{equation}
where $\l_k$ is the $k$-dimensional Lebesgue measure.
We do not need the restriction $k \leq d$ when $\a=0$, as then $\cV_k^{(0)}=f_k(\cR(\eta_t, \d_t))$ is equivalent to counting the complete graphs in the complex. This investigation started with work of Reitzner, Schulte and Th\"ale \cite{RST16}. There for the length-power functional of the Gilbert graph $\cV_1^{(\a)}$  expectation, covariance, and also several limit theorems were derived using the Malliavin-Stein method for Poisson functionals. 
In Theorem \ref{thm:expt} we prove that the expectation for $\cV_k^{(\a)}$ is asymptotically 
\begin{equation} \label{eq:Efk_kurz}
 \frac{\, \m_{k}^{(\a)}}{(k+1)!} \, t^{k+1}\d_t^{k(\a+d)} 
\end{equation}
with an explicitly given constant $\m_{k}^{(\a)}$, and determine in Theorem \ref{thm:covar} the covariance  $\Cov ( \cV_{k_1} ^{(\a_1)}, \cV_{k_2} ^{(\a_2)})$. In particular we investigate the rank of the asymptotic covariance matrix in Theorem \ref{thm:covar-matrix} and Corollary \ref{cor:rank}.
Combining this with  central limit theorems \cite{RS13, S13, S16} for Poisson U-statistics we  obtain our main results. Namely, in Theorem \ref{thm:uni-clt-V} we prove univariate central limit theorems for $\cV_k^{(\a)}$,
$$
d \bigg(\frac{{\cV}_k^{\a}-\E {\cV}_k^{\a}}{\sqrt{\Var\,{\cV}_k^{\a}}}, N\bigg)
\leq 
c_k  t^{- \frac 12} \max\{( t \d_t^d)^{- \frac k2 } , 1\} 
$$
where $N$ is a standard Gaussian random variable, and multivariate central limit theorems for a vector of suitable normalized volume-power functions $(\widehat{\cV}_{k_1}^{(\a_1)}, \dots, \widehat{\cV}_{k_n}^{(\a_n)})$, $k_1 \leq \dots \leq k_n$. The convergence to a multivariate normal distribution also has speed of convergence 
$$
t^{- \frac 12} \,  \max \{  (t \d_t^d)^{-\frac 12 k_n},1 \} ,
$$
see Theorem \ref{thm:mult-clt-V}. 
Note that both theorems cover the whole range where by \eqref{eq:Efk_kurz} the expectation of the number of $k$-simplices tends to infinity. 
In the case $\a=0$, Theorem \ref{thm:uni-clt-V} gives univariate central limit theorems for the number of $k$-facets and Theorem \ref{thm:mult-clt-V} multivariate central limit theorems for the $\bm f$-vector of $\cR(\eta_t, \d_t)$. 

For the Vietoris-Rips complex, a univariate central limit theorem is already known due to work of Penrose \cite{P03} although the central limit theorems there come without error term. In a recent paper by Lachi\'eze-Rey, Schulte and Yukich \cite{LSY19}, error terms for a univariate central limit theorem have been obtained  in the so-called thermodynamic regime and the dense regime as a consequence of a much more general theorem for stabilizing functionals. For the Wasserstein distance a univariate central limit theorem for $f_k(\cC(\eta_t, \d_t))$ of the \v{C}ech complex with error bounds is due to Decreusefond et al. \cite{DFRV_14}.
We are not aware of multivariate central limit theorems for the $\bm f$-vector in the literature.

A modified Gilbert graph could also be defined by replacing the Euclidean distance by some other quantity. For this there are several interesting recent papers, see e.g. \cite{LP_2013a, LP_2013b, LNS_2019}

The analogous question for the \v{C}ech complex is of the same interest. Since all the proofs and results for the Vietoris-Rips complex carry over to  the \v{C}ech complex, we only address this question at the end of this paper in Section \ref{sec:Cech}.

\section{Preliminaries}

We work in $\R^d$, $d \geq 1$, with Euclidean norm $\| \cdot \|$ and Lebesgue measure $\l_d(\cdot)$. A $d$-dimensional ball with center $x\in\R^d$ and radius $r>0$ is denoted by $B^d(x,r)$ and for a non-negative integer $j$, $\kappa_j$ stands for the volume of the $j$-dimensional unit ball $B^j=B^j(0,1)$. In the following we fix a convex compact set $W \subset \R^d$ of unit volume.

\medskip
We use the Landau notation. That is, for $g,h:\R\to\R$ we write $g=o(h)$ if $\lim\limits_{t\to\infty}|g(t)|/|h(t)|=0$, $g=O(h)$ if $\limsup\limits_{t\to\infty}|g(t)|/|h(t)| < \infty$ and $g=\Theta(h)$ if $g=O(h)$ and $h=O(g)$.

\subsection{Poisson point processes}

Our underlying probability space is $(\Omega,\cF,\P)$; expectation, variance and covariance of random variables $X$ and $Y$ with respect to $\P$ are denoted by $\E X$, $\Var X$ and $\Cov (X,Y)$, respectively. We also write $\1(\,\cdot\,)$ for an indicator function.

Let $\bN(W)$ be the space of finite simple counting measures on $W$. As usual we identify a counting measure $\eta$ with its support, which forms a finite subset of $W$, cf.\ \cite[Lemma 3.1.4]{SW08}. Thus for $\eta \in \bN(W)$, $\eta= \{ x_1, \dots , x_n\} $ and a Borel set $A \subset W$, $\eta(A)$ is the number of points of $\eta$ falling in $A$ and $\eta\cap A$ stands for the restricted point configuration  $\{ x_1, \dots , x_n\} \cap A$. 

Assume that $\eta_t$ is a Poisson point process on $W$ of intensity $t>0$. 
This is a random variable defined on the probability space $(\Omega,\cF,\P)$ with values in $\bN(W)$, such that for each Borel set $A \subset W$ the number of random points in $W$ is Poisson distributed with parameter $t \l_d(A)$, and such that the number of points in disjoint sets is independent. Alternatively, one can think of $\eta_t$ as a random set of $\eta_t(W)$ random points, which are independently placed within $W$ according to the uniform distribution.
We denote by $\eta_{t,\neq}^k$ the set of all $k$-tuples of distinct points of $\eta_t$.

The main tool in our investigations is the multivariate Mecke formula for Poisson point processes. In the special case we need in this paper it says that
\begin{equation}\label{eq:Mecke}
\E\sum_{(x_1,\ldots,x_k)\in\eta^k_{t,\neq}}  f(x_1,\ldots,x_k)
= t^k\int\limits_{W^k} \E f (x_1,\ldots,x_k)\, dx_1 \ldots dx_k \,,
\end{equation}
where $k\geq 1$ is a fixed integer, and $f:W^k\times \bN(W)\to\R$ is a non-negative measurable function, cf. \cite[Corollary 3.2.3]{SW08}.

A Poisson U-statistic $F=F(\eta_t)$ of order $k$ is a Poisson functional of the form
$$F=\sum_{(x_1,\ldots,x_k)\in\eta_{t,\neq}^{(k)}}f(x_1,\ldots,x_k)$$ with $k\in\mathbb{N}$ and $f:W^k\rightarrow\bar{\R}$. 
We will always assume that $F$ is in $L^1(\P)$ which by the Mecke formula is guaranteed if $f$ is in $L^1$.
For more information on Poisson point processes and the modern developments connected to the application of Malliavin calculus to Poisson point processes we refer to \cite{LP-book, PR16}.

\subsection{Random simplicial complexes}

Choose some parameter $\d_t>0$ which may depend on $t$. The Gilbert graph is defined as the random graph with vertex set $\eta_t $ and edges between two points $\{x_0, x_1\} \subset \eta_t$ if 
$ \| x_0-x_1\| \leq \d_t$.

The Gilbert graph is the one-skeleton of the \emph{Vietoris-Rips complex}, a random simplicial complex whose $k$-dimensional faces are the abstract simplices 
$\{ x_0, \dots, x_k\} \subset \eta_t$ iff $\| x_i-x_j \| \leq \d_t$. I.e. the Vietoris-Rips complex is the clique complex of the Gilbert graph. We denote the Vietoris-Rips complex by $\cR(\eta_t, \d_t)$ and the set of $k$ faces by
$ \cF_k(\cR (\eta_t, \d_t))  $.
For $k \leq d$ the faces have a geometric realization which is just the convex hull $[x_0, \dots, x_k] \subset W$ of $x_0, \dots, x_k$.

We denote by $\D_{s} [x_0, \dots x_k]$ the $k$-dimensional volume of the convex hull of the points $x_0, \dots, x_k$ if all edges have length at most $s$, and set $\D_{s} [x_0, \dots, x_k]=0$ otherwise. In the case $k >d$ there is no $k$-dimensional realization and thus in this case we just define $\D_{s} [x_0, \dots x_k]^0=1 $ if and only if all pairwise distances are bounded by $s$. Thus for all $k \geq 0$,
$$ 
F \in \cF_k (\cR (\eta_t, \d_t)) \Leftrightarrow 
\D_{\d_t}(F)^0=1  .
$$
Note that for $k \leq d$ we identify the abstract simplex $F=\{x_0, \dots, x_k\}  \in \cF_k(\cV(\eta_t, \d_t))$ with its geometric realization $[x_0, \dots, x_k]$.

\medskip
The quantity we are interested in this paper is the volume-power functional of the Vietoris-Rips complex, already defined in \eqref{eq:eqn1},
\begin{align}
\cV_k^{(\a)}
& := \nonumber
\frac{1}{(k+1)!}\sum_{F \in \cF_k(\cR(\eta_t, \d_t))}  \lambda_k(F)^{\a} 
\\ &= \label{def:Vka}
\frac{1}{(k+1)!}\sum_{(x_0, \dots, x_k) \in \eta_{t, \neq}^k} \Delta_{\d_t}[x_0, \dots, x_k]^{\a}  .
\end{align}
If $k \leq d$, this functional is in $L^1(\P)$ for $\a > -d$, see Theorem \ref{thm:expt}, and for $k>d$ we just consider the case $\a=0$.

\medskip
Closely connected to the Vietoris-Rips complex is the \emph{\v{C}ech complex} $\cC(\eta_t, \d_t)$. This is the random simplicial complex where an abstract $k$-simplex $\{x_{0},\ldots,x_{k}\}$ is in $\cC(\eta_t,\d_t)$ if $\cap_{i=0}^k B(x_{i},\d_t/2)\neq\emptyset$. All results of this paper can verbatim be formulated for the \v{C}ech complex $\cC(\eta_t, \d_t)$ instead of the Vietoris-Rips complex. This just changes some of the constants, see Section \ref{sec:Cech}.

\subsection{Moment matrices}

Let $X$ be a random variable, and let $\bm c=(c_1, \dots c_n) \in\R^n$ be chosen such that $m_{c_i+c_j}=\E X^{c_i+c_j} $ exists for $i,j=1, \dots, n$. Denote by $M_X (\bm c)$ the generalized moment matrix 
$$ M_X (\bm c) = (m_{c_i+c_j})_{i,j=1, \dots, n}. $$
The following theorem gives a criterion whether the generalized moment matrix is of full rank. The result should be \lq well known\rq , but we could not find it in the literature. 
\begin{theorem}\label{thm:rank-moment-matrix}
The generalized moment matrix $M_X(\bm c)$ is positive semidefinite. Moreover $M_X(\bm c)\, p = 0$ for some $ p \in \R^n$ implies
$$
X \in \Big\{ x \in \R  \colon \sum_{i=1}^n p_i x^{c_i}=0 \Big\}  \ \ a.s.
$$
If, in particular, $c_i \neq c_j$ for all $i \neq j$, and ${\rm supp} (X) $ contains an interval, then $M_X( \bm c)$ is of full rank.
\end{theorem}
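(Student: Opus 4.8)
The plan is to reduce all three assertions to the single algebraic observation that the quadratic form attached to $M_X(\bm c)$ is, after taking expectations, a perfect square. Concretely, for any $p=(p_1,\dots,p_n)\in\R^n$ I would write
$$
p^{\top} M_X(\bm c)\, p = \sum_{i,j=1}^n p_i p_j\, \E\big[X^{c_i+c_j}\big] = \E\Big[\sum_{i,j=1}^n p_i p_j\, X^{c_i} X^{c_j}\Big] = \E\Big[\Big(\sum_{i=1}^n p_i X^{c_i}\Big)^2\Big],
$$
where the interchange of sum and expectation is merely linearity over a finite sum, each summand being integrable since $\E[X^{c_i+c_j}]=m_{c_i+c_j}$ exists by hypothesis. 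Here one uses implicitly that $X$ takes values in $(0,\infty)$, so that the real powers $X^{c_i}$ are well defined. Since the right-hand side is the expectation of a non-negative random variable, it is $\geq 0$, which already gives the positive semidefiniteness in the first claim.

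For the second claim I would observe that $M_X(\bm c)\,p=0$ forces $p^{\top} M_X(\bm c)\,p=0$, hence $\E[(\sum_i p_i X^{c_i})^2]=0$. A non-negative random variable with vanishing expectation is zero almost surely, so $\sum_{i=1}^n p_i X^{c_i}=0$ a.s., which is exactly the statement that $X$ lies almost surely in the zero set $\{x : \sum_i p_i x^{c_i}=0\}$.

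For the third claim I would argue by contradiction: if $M_X(\bm c)$ were not of full rank, there would be some $p\neq 0$ with $M_X(\bm c)\,p=0$, and the second claim would yield $\P(g(X)=0)=1$ for the generalized polynomial $g(x)=\sum_i p_i x^{c_i}$. Since $g$ is continuous on $(0,\infty)$, its zero set $Z=\{x>0 : g(x)=0\}$ is closed, and $\P(X\in Z)=1$ forces ${\rm supp}(X)\subseteq Z$, the support being the smallest closed set of full measure. By hypothesis ${\rm supp}(X)$ contains an interval $I$, so $g$ vanishes identically on $I$. It then suffices to show that power functions $x^{c_1},\dots,x^{c_n}$ with pairwise distinct exponents are linearly independent on any interval $I\subset(0,\infty)$, which forces $p=0$ and produces the contradiction.

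The crux is therefore this last linear-independence statement, a generalized-Vandermonde fact and the only step that is not immediate. I expect to prove it by induction on $n$: dividing the relation $\sum_i p_i x^{c_i}=0$ on $I$ by $x^{c_1}$, with $c_1$ the smallest exponent, and differentiating eliminates the constant term and leaves a relation $\sum_{i\geq 2} p_i(c_i-c_1)\,x^{c_i-c_1-1}=0$ whose exponents are again pairwise distinct; the inductive hypothesis gives $p_i(c_i-c_1)=0$, hence $p_i=0$ for $i\geq 2$ since $c_i\neq c_1$, and substituting back yields $p_1=0$. (Equivalently, the substitution $x=e^u$ turns the $x^{c_i}$ into exponentials $e^{c_i u}$ with distinct rates, whose linear independence is classical.) The only remaining points requiring care are the standing assumption $X>0$ that makes the powers meaningful and the passage from the almost-sure identity to the inclusion ${\rm supp}(X)\subseteq Z$; both are routine once stated precisely.
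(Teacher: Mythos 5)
Your proposal is correct and follows essentially the same route as the paper: the quadratic form $p^{\top}M_X(\bm c)\,p=\E\bigl[\bigl(\sum_i p_i X^{c_i}\bigr)^2\bigr]$ gives semidefiniteness, its vanishing on the kernel forces $X$ into the zero set almost surely, and full rank follows from linear independence of $x^{c_1},\dots,x^{c_n}$ on an interval. The only difference is that you spell out details the paper merely asserts---the passage from the almost-sure identity to ${\rm supp}(X)\subseteq Z$ and an inductive (generalized-Vandermonde) proof of the independence of distinct power functions---which is a welcome tightening rather than a different argument.
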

\begin{proof}
The proof is a modification of similar results on moment matrices, see e.g. Laurent \cite{L_2009}. 
For $p \in \R^n$ we have 
$$
p^T M_X(\bm c) p =
\sum_{ij} p_i m_{c_i+c_j} p_j 
= \E \sum_{ij} p_i X^{c_i} X^{c_j}  p_j  
= \E (\sum_{i} p_i X^{c_i} )^2 \geq 0
$$
and thus $M_X(\bm c)$ is positive semidefinite. Further, if $M_X(\bm c) p=0$, then also
$$
p^T M_X(\bm c) p = \E (\sum_{i} p_i X^{c_i} )^2 = 0
. $$
Hence with probability one $X$ takes values in the root of the function $\sum p_i x^{c_i}$. 
If ${\rm supp }(X)$ contains an interval $I$ then $\sum p_i x^{c_i}=0$ for all $x \in I$. This is only possible if $p_i=0$ for $i=1, \dots n$, because the functions $x^{c_1}, \dots, x^{c_n}$ are independent for $c_i \neq c_j$.
\end{proof}

\section{Moments of random simplices}

\subsection{The expectation of $\cV^{(\a)}_k$}

Choose $k$ points $X_1, \dots, X_k$ independently and uniformly in the unit ball. To shorten our notation, we write in the following $\{x_l\}_{l=j}^k $ for the point set $\{ x_j, \dots, x_k\}$. 
The random points $\{X_l\}_{l=1}^k$ and the origin form a random $k$-simplex. For $k \leq d$ we denote by $\m_{k}^{(\a)} $ the moment of order $\a$ of its volume if all edges are bounded by one.
\begin{equation*}\label{def:Cka}
\m_{k}^{(\a)} = 
\k_d^k\  \E \D_{1}[0, \{ X_l\}_{l=1}^k]^\a
=
\int\limits_{(B^d)^k}\D_{1} [0,\{x_l\}_{l=1}^k]^{\a}\ \dm x_1\cdots\dm x_k
\end{equation*}
with $\m_0^ {(\a)}=1$. In the case $k >d$ the definition only applies to $\a=0$.
 Since the volume of a simplex is bounded by the product of the length of the generating edges, we have
$$
\m_{k}^{(\a)} 
\leq
\int\limits_{(B^d)^k}\prod^k_{i=1} \|x_i\| ^{\a}\ \dm x_1\cdots\dm x_k
=
\left( \frac {d \k_d }{\a+d} \right)^k
$$
and thus is finite for $\a > -d$.

\begin{theorem}
\label{thm:expt}
Assume $\a>-d$ for $k \leq d$ and $\a=0$ for $k>d$. Then we have
$$
\E\cV_k^{(\a)}  
 =
 \frac{\, \m_{k}^{(\a)}}{(k+1)!} \, t^{k+1}\d_t^{k(\a+d)} (1+O(\d_t )) 
$$
where the implicit constant in $O(\d_t )$ only depends on $W$.
\end{theorem}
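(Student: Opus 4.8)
The plan is to reduce the expectation to a deterministic integral via the Mecke formula, and then to extract the leading order by a translation-and-rescaling substitution, treating the boundary of $W$ as a lower-order perturbation. Applying the multivariate Mecke formula \eqref{eq:Mecke} to the representation \eqref{def:Vka} (the tuple $(x_0, \dots, x_k)$ consists of $k+1$ distinct points) gives
\begin{equation*}
\E \cV_k^{(\a)} = \frac{t^{k+1}}{(k+1)!} \int\limits_{W^{k+1}} \D_{\d_t}[x_0, \dots, x_k]^\a \, \dm x_0 \cdots \dm x_k .
\end{equation*}
Integrability of the integrand for $\a > -d$, respectively for $\a = 0$ when $k > d$, follows from the bound on $\m_k^{(\a)}$ established just before the statement, so the Mecke formula indeed applies.

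For the inner $k$ variables I would substitute $x_i = x_0 + \d_t y_i$, $i = 1, \dots, k$, keeping $x_0$ fixed and setting $y_0 = 0$. Under this map every edge length scales as $\|x_i - x_j\| = \d_t \|y_i - y_j\|$, so the constraint that all edges be at most $\d_t$ turns into the constraint that all edges among $0, y_1, \dots, y_k$ be at most $1$; moreover the $k$-dimensional volume scales by $\d_t^k$, whence $\D_{\d_t}[x_0, \dots, x_k]^\a = \d_t^{k\a}\, \D_1[0, y_1, \dots, y_k]^\a$ (the case $\a = 0$, $k > d$ being the scale-invariant indicator). Together with the Jacobian $\dm x_1 \cdots \dm x_k = \d_t^{kd}\, \dm y_1 \cdots \dm y_k$ this produces the prefactor $\d_t^{k(\a+d)}$ and an inner integral in $y$ over the domain $\{ y : x_0 + \d_t y_i \in W \text{ for all } i\}$.

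The key observation is that the integrand in $y$ is supported on $\{ \|y_i\| \leq 1 \text{ for all } i\}$, since the edges emanating from $y_0 = 0$ must have length at most $1$; hence each admissible $y_i$ lies in $B^d$. Writing $W_{-r} := \{ x \in W : B^d(x,r) \subseteq W\}$ for the inner parallel body, for every $x_0 \in W_{-\d_t}$ the entire support $(B^d)^k$ is admissible and the inner integral equals exactly $\m_k^{(\a)}$, while for $x_0 \in W \setminus W_{-\d_t}$ the domain is merely truncated, so the inner integral lies between $0$ and $\m_k^{(\a)}$. Splitting the outer integral accordingly yields
\begin{equation*}
\int\limits_{W^{k+1}} \D_{\d_t}[x_0, \dots, x_k]^\a \, \dm x_0 \cdots \dm x_k = \d_t^{k(\a+d)} \m_k^{(\a)} \left( \l_d(W_{-\d_t}) + \theta\, \l_d(W \setminus W_{-\d_t}) \right)
\end{equation*}
for some $\theta \in [0,1]$, which because $\l_d(W) = 1$ equals $\d_t^{k(\a+d)} \m_k^{(\a)}\, ( 1 - (1-\theta)\,\l_d(W \setminus W_{-\d_t}) )$.

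The final step, and the only genuinely geometric one, is the boundary-layer estimate $\l_d(W \setminus W_{-\d_t}) = O(\d_t)$ with an implicit constant depending only on $W$. For a convex body this is standard: the inner parallel volume satisfies $\l_d(W) - \l_d(W_{-r}) \leq S(W)\, r$ with $S(W)$ the surface area of $W$, so the deficit is $O(\d_t)$. Combining this with the previous display and multiplying by $t^{k+1}/(k+1)!$ yields the claimed asymptotics. I expect the main point requiring care to be precisely this boundary contribution, namely verifying that the truncated inner integrals near $\partial W$ are dominated uniformly by $\m_k^{(\a)}$ and that the exceptional $x_0$-set has $d$-volume $O(\d_t)$, whereas the interior contribution reproduces the constant $\m_k^{(\a)}$ exactly by construction.
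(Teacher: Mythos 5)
Your proposal is correct and follows essentially the same route as the paper: Mecke formula, the substitution $x_i = x_0 + \d_t y_i$ (the paper centers at $x_k$ rather than $x_0$, which is immaterial), the observation that the support forces $y_i \in B^d$, and the sandwich between $\m_k^{(\a)}\,\l_d(W_{-\d_t})$ and $\m_k^{(\a)}$ finished with the standard inner parallel body estimate $\l_d(W_{-\d_t}) \geq 1 - S(W)\,\d_t$. Your $\theta$-averaging formulation of the boundary-layer contribution is just a repackaging of the paper's two-sided bound, so nothing of substance differs.
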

\begin{proof}
We apply the multivariate Mecke formula \eqref{eq:Mecke} to the definition \eqref{def:Vka} of the volume-power functionals,  and substitute $\d_t(x_i-x_k)$ for $x_i$, $i \neq k$, to get  
\begin{eqnarray*}
\E\cV_k^{(\a)}
&=&
\frac{t^{k+1}}{(k+1)!}\int\limits_{W^{k+1}} \D_{\d_t} [\{x_l\}_{l=0}^k]^{\a}\ \dm x_0\cdots\dm x_k
\\ &=&
\frac{\d_t^{k(\a+d)}t^{k+1}}{(k+1)!}\int\limits_W\ \int\limits_{(\d_t^{-1}(W-x_k) \cap B^d)^k} 
\D_{1} [0,\{x_l\}_{l=0}^{k-1}]^{\a}\ \dm x_0\cdots\dm x_k  
\end{eqnarray*}
where the condition $\|x_i\|\leq1$ has been taken into account in the range of integration $x_i \in B^d$. As an upper bound we have
\begin{eqnarray*}
\E\cV_k^{(\a)}
&\leq &
\frac{\d_t^{k(\a+d)}t^{k+1}}{(k+1)!}\int\limits_W \int\limits_{(B^d)^k} 
\D_{1} [0,\{x_l\}_{l=0}^{k-1}]^{\a}\ \dm x_0 \cdots\dm x_k  .
\end{eqnarray*}
This proves that the expectation is finite for $\a > -d$.
For an estimate from below we consider the inner parallel set of $W$, 
$W_{-\d_t}=\{x \colon B^d(x,\d_t)\subset W\}$. 
Observe that for $x_k \in W_{- \d_t}$ we have $\d_t^{-1} (W-x_k ) \cap B^d = B^d$.
\begin{eqnarray*}
\E\cV_k^{(\a)}
& \geq &
\frac{\d_t^{k(\a+d)}t^{k+1}}{(k+1)!}\int\limits_{W_{-\d_t}} \ \int\limits_{(B^d)^k} 
\D_{1} [0,\{x_l\}_{l=0}^{k-1}]^{\a}
 \dm x_0\cdots\dm x_k 
\\ &=&
\frac{\d_t^{k(\a+d)}t^{k+1}}{(k+1)!} \m_{k}^{(\a)}V(W_{-\d_t}) 
\end{eqnarray*}
We thus obtain
$$\frac{\d_t^{k(\a+d)}t^{k+1}}{(k+1)!}\m_{k}^{(\a)}V(W_{-\d_t})\leq\E\cV_k^{(\a)}\leq\frac{\d_t^{k(\a+d)}t^{k+1}}{(k+1)!}\m_{k}^{(\a)} .$$
The well known inequality 
\begin{equation}\label{eq:innpar} 
V(W_{-\d_t}) \geq 1-S(W)\d_t  
\end{equation}
for convex sets of volume one gives the desired result .
\end{proof}

\subsection{The typical $k$-simplex}

Assume in this subsection that $\eta_t$ is a stationary Poisson point process in $\R^d$ of intensity $t>0$.
Fix some $k \in \{ 1, \dots, d\}$. To introduce the notion of a \textit{typical $k$-simplex} of $\cR(\eta_t, \d_t)$ we take the usual approach, see e.g. \cite[Chapter 4.1]{SW08}. We define for a simplex $S$ the center $c(S)$ as the center of the inball in the corresponding affine plane, and denote by $\cS_k^{(0)}$ the set of all centered $k$-simplices, $c(S)=0$, in $\R^d$. For $A \subset \cS^{(0)}_k$ and a convex set $W \subset \R^d$ with $V_d(W)=1$ we define
\begin{eqnarray*}
\Q (A) 
&=&
\frac 1{\g_{t}\, (k+1)!} \,  
\E \sum_{(x_0,\ldots,x_k)\in\eta_{t,\neq}^{(k+1)}}  \D_{\d_t} [\{x_l\}_{l=0}^k]^{0} 
\\ && \hskip1cm 
\1 \Big( c([\{x_l\}_{l=0}^k]) \in W,\, [\{x_l\}_{l=0}^k]-c([\{x_l\}_{l=0}^k]) \in A \Big) . 
\end{eqnarray*}
This is the suitable normalized mean number of simplices with center in $W$ and shape in $A$. We choose $\g_{t}$ in such a way that $\Q (\cS^{(0)}_k) =1$ which means that $\Q$ is a probability measure on $\cS_k^{(0)}$. Thus
\begin{eqnarray*}
\g_{t} 
&=&
\frac{1}{(k+1)!} \, \E  \sum_{(x_0,\ldots,x_k)\in\eta_{t,\neq}^{(k+1)}}  \D_{\d_t} [\{x_l\}_{l=0}^k]^{0}  
\1(c(\{x_l\}_{l=0}^k) \in W)  
\end{eqnarray*}
and it is well known that neither $\Q$ nor $\g_{t}$ depends on the choice of $W$, because $\eta_t$ is a stationary Poisson point process.
It is immediate that 
$$
\1(\{x_l\}_{l=0}^k \subset W)  
\leq \1(c([\{x_l\}_{l=0}^k]) \in W)   \leq 
\1(\{x_l\}_{l=0}^k \subset W_{\d_t}).  
$$
Hence using Theorem \ref{thm:expt} for $W$ and $W_{\d_t}$ we obtain 
$$ \g_t =   \frac{\, \m_{k}^{(0)}}{(k+1)!} \, t^{k+1}\d_t^{k d} (1+O(\d_t )) 
$$
and thus $\g_t$ equals asymptotically the expected number of $k$-simplices in $W$.

The \textit{typical $k$-simplex} of the Vietoris-Rips complex $\cR(\eta_t,\d_t)$ is a random simplex $S_{\rm typ}$ chosen according to $\Q$.
Now a straightforward application of Theorem \ref{thm:expt} yields the $\a$-moment of the volume of the typical cell.
\begin{corollary}
The $\a$-moment of the typical $k$-simplex of the Vietoris-Rips complex $\cR(\eta_t,\d_t)$ is 
$$
\E V_k(S_{\rm typ})^\a =  \frac{ \m_{k}^{(\a)}}{\m_{k}^{(0)}} \, \d_t^{\a k} (1+O(\d_t )) .
$$
\end{corollary}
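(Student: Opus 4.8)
The plan is to reduce everything to the expectation formula of Theorem~\ref{thm:expt} together with the asymptotics of $\g_t$ already recorded above. Since the $k$-dimensional volume is translation invariant, for any $(k+1)$-tuple generating a genuine $k$-simplex we have $V_k([\{x_l\}_{l=0}^k] - c([\{x_l\}_{l=0}^k]))^\a = \D_{\d_t}[\{x_l\}_{l=0}^k]^\a$. Hence, evaluating the defining measure $\Q$ on the function recording the $\a$-th power of the volume of the centred shape gives
\begin{equation*}
\E V_k(S_{\rm typ})^\a = \frac{1}{\g_t\,(k+1)!}\, \E \sum_{(x_0,\dots,x_k)\in\eta_{t,\neq}^{(k+1)}} \D_{\d_t}[\{x_l\}_{l=0}^k]^\a\, \1\big(c([\{x_l\}_{l=0}^k]) \in W\big).
\end{equation*}
Thus the entire statement amounts to computing this numerator and dividing by $\g_t$.

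To handle the centre indicator I would use the same sandwiching already employed above for $\g_t$, namely
$$\1(\{x_l\}_{l=0}^k \subset W) \leq \1\big(c([\{x_l\}_{l=0}^k]) \in W\big) \leq \1(\{x_l\}_{l=0}^k \subset W_{\d_t}),$$
so that the numerator is bounded below by the sum of $\D_{\d_t}^\a$ over tuples in $W$ and above by the same sum over the outer parallel set $W_{\d_t}$; each of these equals $(k+1)!$ times the corresponding $\E\cV_k^{(\a)}$. Theorem~\ref{thm:expt} applies to both convex bodies, and since $V(W_{\d_t}) = 1 + O(\d_t)$ (the outer analogue of \eqref{eq:innpar}), both bounds equal $\m_k^{(\a)}\, t^{k+1}\d_t^{k(\a+d)}(1 + O(\d_t))$.

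Finally I would divide by the value $\g_t = \frac{\m_k^{(0)}}{(k+1)!}\, t^{k+1}\d_t^{kd}(1 + O(\d_t))$ recorded just above. The factors $t^{k+1}$ and $(k+1)!$ cancel, the power $\d_t^{k(\a+d)}/\d_t^{kd}$ leaves $\d_t^{\a k}$, and the ratio of the two $(1+O(\d_t))$ factors is again $1 + O(\d_t)$, which yields the claim. There is no serious obstacle here: the only point requiring attention is that the centre condition $c([\{x_l\}_{l=0}^k]) \in W$ does not literally coincide with the configuration condition $\{x_l\}_{l=0}^k \subset W$, which is exactly what the parallel-set sandwiching resolves; the remainder is bookkeeping of the error terms.
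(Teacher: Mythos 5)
Your proof is correct and is essentially the paper's own argument: the paper dismisses the corollary as a ``straightforward application of Theorem \ref{thm:expt}'', having already set up exactly the center-indicator sandwich $\1(\{x_l\}_{l=0}^k \subset W) \leq \1(c([\{x_l\}_{l=0}^k]) \in W) \leq \1(\{x_l\}_{l=0}^k \subset W_{\d_t})$ and the asymptotics $\g_t = \frac{\m_k^{(0)}}{(k+1)!} t^{k+1}\d_t^{kd}(1+O(\d_t))$ in the preceding paragraph. Your explicit bookkeeping --- applying Theorem \ref{thm:expt} to both $W$ and $W_{\d_t}$ with $V(W_{\d_t}) = 1+O(\d_t)$ and then dividing by $\g_t$ --- is precisely the computation the paper leaves implicit.
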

This just reflects the fact that the typical cell is a simplex with edges bounded by $\d_t$ and thus - up to boundary effects - is given by the random variable $\D_{\d_t}$.

In a similar way we can compute the $j$-volume of the union of the $j$-faces of the typical $k$-simplex. Denote by $\cF_{j}(P)$ the $j$-dimensional faces of a polytope $P$. Because a $k$-simplex has $k+1 \choose j+1$ faces of dimension $j$, the Mecke formula  yields
\begin{align*}
\E V_j(\cF_j(S_{\rm typ})) 
&=
\frac{1}{\g_t (k+1)!} \, 
\E  \!\! \sum\limits_{\substack{ (x_0,\ldots,x_k)\in\eta_{t,\neq}^{(k+1)} \\ c([\{x_l\}_{l=0}^k]) \in W }}  \!\!
\D_{\d_t} [\{x_l\}_{l=0}^k]^{0} 
\sum\limits_{F \in \cF_j([\{x_l\}_{l=0}^k])} \D_{\d_t} [F]  
\\[1ex] &=
\frac{1}{\g_t (k+1)!} \,  { k+1 \choose j+1} 
t^{k+1}
\\&\hskip1.5cm 
\int\limits_{c([\{x_l\}_{l=0}^k]) \in W }  \D_{\d_t} [\{x_l\}_{l=0}^k]^{0} 
\D_{\d_t} [\{x_l\}_{l=0}^j]  dx_0 \dots dx_k .
\end{align*}
We define
\begin{align}
\m_{j, k:j+1}^{(1, 0)}
=& \label{def:Cjk10}
\int\limits_{(B^d)^{k}}
\D_{1} [0,\{x_l\}_{l=1}^{j}]
\D_{1} [0, \{x_l\}_{l=1}^{k}]^{0}
\dm x_1\cdots \dm x_{k} .
\end{align}
Using a similar approach as in Theorem \ref{thm:expt} we obtain

\begin{align*}
\E V_j(\cF_j(S_{\rm typ})) 
&=
{k+1 \choose j+1} 
\frac{\m^{(1,0)}_{j,k:j+1} }{ \m_{k}^{(0)}} (1+O(\d_t ))  .
\end{align*}

\subsection{The Covariance Structure of $\cV^{(\a)}_k$}

We consider the covariance in two variables - in $\a$ and in $k$.
In the following we denote by $\m_{k_1, k_2:m}^{(\a_1, \a_2)} $ the mixed moment of the volume of two random simplices in $B^d$ with edge lengths bounded by one. For this  we put one vertex in the origin, choose in $B^d$ for the first simplex $k_1-m+1$ independent vertices $\{ X_1, \dots, X_{k_1-m+1}\}$ and $m-1$ further random vertices $\{ X_{k_1-m+2}, \dots, X_{k_1} \}$ defining $\D_{1} [0,\{X_l\}_{l=1}^{k_1}]$, with $1\leq m \leq \min\{ k_1, k_2\}+1$. We use the last $m-1$ vertices also for the second simplex and in addition the random vertices $\{X_{k_1+1} \dots X_{k_1+k_2-m+1} \}$ to define the second simplex for $\D_{1} [0, \{X_l\}_{l=k_1-m+2}^{k_1+k_2-m+1}]$. Assuming here w.l.o.g.  $k_1 \leq k_2$, we generalize the definition \eqref{def:Cjk10} to 
\begin{align*}\label{def:Caakkm}
\m_{k_1, k_2:m}^{(\a_1, \a_2)}
=& 
\int\limits_{(B^d)^{k_1+k_2+1-m}}
\D_{1} [0,\{x_l\}_{l=1}^{k_1}]^{\a_1} 
\D_{1} [0, \{x_l\}_{l=k_1-m+2}^{k_1+k_2-m+1}]^{\a_2}
\\ & \hskip6cm  \nonumber
\dm x_1\cdots \dm x_{k_1+k_2-m+1} 
\end{align*}
and set by symmetry $\m_{k_1, k_2:m}^{(\a_1, \a_2)} = \m_{k_2, k_1:m}^{(\a_2, \a_1)} $. 
Clearly for $k_1=0$ we have $\m_{0, k_2:m}^{(\a_1, \a_2)} = \m_{k_2}^{(\a_2)} $, and for $k_i>d$ we only allow $\a_i=0$.
Again, since the volume of a simplex is bounded by the product of the length of the generating edges, we have
$$
\m_{k_1, k_2:m}^{(\a_1, \a_2)} 
\leq
 \left( \frac {d \k_d }{\a_1+d} \right)^{k_1-m+1} \left( \frac {d \k_d }{\a_1+\a_2+d} \right)^{m-1}  \left( \frac {d \k_d }{\a_2+d} \right)^{k_2-m+1}
$$
for $k_1 \leq k_2$, and thus $\m_{k_1, k_2}^{(\a_1, \a_2)} $ is finite for $\min\{\a_1, \a_2, \a_1+\a_2\} > -d$.

In the following theorem we assume $k_1 \leq k_2$, $\a_i=0$ for $k_i>d$, and  $\min\{\a_1, \a_2, \a_1+\a_2\} > -d$.
\begin{theorem}\label{thm:covar}
The covariance is given by
\begin{align*}
\Cov\big(\cV_{k_1}^{(\a_1)},\cV_{k_2}^{(\a_2)}\big)
= &
\sum_{m=1}^{\min k_i+1} \frac{ \m_{k_1, k_2:m}^{(\a_1, \a_2)}
 }{m! \prod (k_i-m+1)!}
t^{\sum k_i \, -m+2} \d_t^{\sum (d+\a_i) k_i \,  -d(m-1) } 
\\ & \hskip8cm (1+o(1)) .
\end{align*}
In particular, for $2 \a > - d$ we have 
\begin{eqnarray*}
\Var\,\cV_{k}^{(\a)}
&=&
\sum_{m=1}^{k+1} \frac{ \m_{k, k:m}^{(\a, \a)}
 }{m! ((k-m+1)!)^2}
t^{2k \, -m+2} \d_t^{2(d+\a) k \,  -d(m-1) } 
(1+o(1)) .
\end{eqnarray*}
\end{theorem}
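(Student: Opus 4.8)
The plan is to compute the covariance directly via the multivariate Mecke formula, organizing the calculation according to how many points the two simplices share. Writing $\cV_{k_1}^{(\a_1)}$ and $\cV_{k_2}^{(\a_2)}$ as sums over tuples as in \eqref{def:Vka}, the product $\cV_{k_1}^{(\a_1)}\cV_{k_2}^{(\a_2)}$ becomes a double sum over a $(k_1+1)$-tuple and a $(k_2+1)$-tuple from $\eta_{t,\neq}$. The key structural observation is that these two tuples may share anywhere from $m-1=0$ to $m-1=\min\{k_1,k_2\}$ common points (indexing so that $m$ ranges in $\{1,\dots,\min k_i+1\}$). I would split the double sum accordingly into blocks indexed by the number of shared vertices, and within each block apply the Mecke formula \eqref{eq:Mecke} to the resulting sum over distinct tuples of total size $k_1+k_2+2-m$.

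First I would treat $\E[\cV_{k_1}^{(\a_1)}\cV_{k_2}^{(\a_2)}]$. For the block with $m-1$ shared points, the Mecke formula produces a factor $t^{k_1+k_2+2-m}$ together with an integral over $W^{k_1+k_2+2-m}$ of the product $\D_{\d_t}[\cdot]^{\a_1}\D_{\d_t}[\cdot]^{\a_2}$ of the two simplex-volume powers, where the two simplices share the designated common vertices. The combinatorial prefactor counts the ways to select which of the two tuples' coordinates coincide; after accounting for the $1/(k_1+1)!$ and $1/(k_2+1)!$ normalizations this yields the factor $1/\big(m!\prod(k_i-m+1)!\big)$. Then, exactly as in the proof of Theorem \ref{thm:expt}, I would fix the shared vertex at a base point (say translate so one common point is near a variable $x_k\in W$) and rescale the remaining $k_1+k_2+1-m$ coordinates by $\d_t$; the edge-length constraints confine each rescaled coordinate to $B^d$, and the integral converges (up to a boundary correction $1+O(\d_t)$ coming from the inner parallel set $W_{-\d_t}$) to $\m_{k_1,k_2:m}^{(\a_1,\a_2)}\cdot\d_t^{\,d(k_1+k_2+1-m)}$, times the appropriate power of $\d_t^{\,\a_i k_i}$ from the volume powers. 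Collecting powers gives the exponent $\sum(d+\a_i)k_i-d(m-1)$ stated in the theorem.

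The covariance is $\E[\cV_{k_1}^{(\a_1)}\cV_{k_2}^{(\a_2)}]-\E\cV_{k_1}^{(\a_1)}\,\E\cV_{k_2}^{(\a_2)}$, so the final step is to verify that the $m=1$ term (no shared points) is precisely cancelled by the product of expectations. Indeed, the $m=1$ block corresponds to two disjoint tuples, contributing $t^{k_1+k_2+2}\m_{0,\dots}^{\cdots}$, which to leading order equals $\E\cV_{k_1}^{(\a_1)}\E\cV_{k_2}^{(\a_2)}$ by Theorem \ref{thm:expt}; the subtraction removes the top-order term and leaves the $m\geq 1$ sum as written, with the understanding that for $m=1$ only the lower-order $o(1)$ discrepancy survives. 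The variance formula follows by specializing $k_1=k_2=k$, $\a_1=\a_2=\a$, noting the constraint $2\a>-d$ ensures $\m_{k,k:m}^{(\a,\a)}<\infty$, and that the symmetry of the two identical tuples introduces the squared factorial $((k-m+1)!)^2$ in the denominator.

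The main obstacle I expect is bookkeeping rather than analysis: correctly identifying the combinatorial multiplicities for each overlap pattern and confirming that the disjoint ($m=1$) contribution matches $\E\cV_{k_1}^{(\a_1)}\E\cV_{k_2}^{(\a_2)}$ up to the claimed order, so that the leading singular terms truly cancel in the covariance. A secondary technical point is justifying uniformly across all $m$ that the rescaling-and-truncation argument produces the same $1+o(1)$ boundary error; this requires the finiteness bounds on $\m_{k_1,k_2:m}^{(\a_1,\a_2)}$ recorded before the theorem, which hold under the hypothesis $\min\{\a_1,\a_2,\a_1+\a_2\}>-d$, together with the integrability of $\D_{\d_t}$ restricted to $B^d$ and the estimate \eqref{eq:innpar} controlling $V(W_{-\d_t})$.
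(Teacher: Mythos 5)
Your overall strategy --- expand the product as a double sum over tuples, decompose by the number of shared vertices, apply the Mecke formula blockwise, then rescale by $\d_t$ and control the boundary via $W_{-\d_t}$ and \eqref{eq:innpar} --- is exactly the paper's proof. But there is a genuine gap in your treatment of the disjoint block, compounded by an off-by-one in your indexing. In the theorem, $m$ counts the \emph{shared} points: the block with $m$ shared points has $k_1+k_2+2-m$ distinct points, hence Mecke factor $t^{k_1+k_2+2-m}$, and the prefactor $1/\bigl(m!\prod(k_i-m+1)!\bigr)$ comes from multiplying by ${k_1+1 \choose m}\frac{(k_2+1)!}{(k_2-m+1)!}$ against the normalizations. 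You instead declare that $m-1$ points are shared while assigning that block $k_1+k_2+2-m$ distinct points and the prefactor belonging to $m$ shared points; under your convention the disjoint case becomes $m=1$, which forces you into the problematic final step.

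That final step is the real error. You claim the disjoint block equals $\E\cV_{k_1}^{(\a_1)}\E\cV_{k_2}^{(\a_2)}$ only \lq\lq to leading order,\rq\rq\ with \lq\lq only the lower-order $o(1)$ discrepancy\rq\rq\ surviving as the $m=1$ term. In fact the disjoint block cancels \emph{exactly}: by the Mecke formula it factorizes identically as
$$
\frac{t^{k_1+1}}{(k_1+1)!}\int\limits_{W^{k_1+1}}\D_{\d_t}[\{x_l\}_{l=0}^{k_1}]^{\a_1}\,\dm x_0\cdots \dm x_{k_1}
\ \cdot\
\frac{t^{k_2+1}}{(k_2+1)!}\int\limits_{W^{k_2+1}}\D_{\d_t}[\{x_l\}_{l=0}^{k_2}]^{\a_2}\,\dm x_0\cdots \dm x_{k_2},
$$
which \emph{is} $\E\cV_{k_1}^{(\a_1)}\,\E\cV_{k_2}^{(\a_2)}$, no asymptotics needed; the theorem's $m=1$ term is the genuinely distinct one-shared-point block, of order $t^{k_1+k_2+1}\d_t^{\sum(d+\a_i)k_i}$, and not a residual of an imperfect cancellation. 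This matters quantitatively: if the cancellation held only up to the $O(\d_t)$ error of Theorem \ref{thm:expt}, the leftover would be of order $t^{k_1+k_2+2}\d_t^{\sum(d+\a_i)k_i+1}$, which exceeds the one-shared-point term by a factor $t\d_t$, and in the dense regime (e.g.\ $d\geq 2$ with $t\d_t^d\to\infty$) this factor can diverge, so your argument as written cannot yield the stated covariance asymptotics. Once the exact factorization is observed, the remainder of your plan (rescaling, the $B^d$ upper bound and $W_{-\d_t}$ lower bound, and finiteness of $\m_{k_1,k_2:m}^{(\a_1,\a_2)}$ under $\min\{\a_1,\a_2,\a_1+\a_2\}>-d$) goes through exactly as in the paper.
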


\begin{proof}
Without loss of generality, we again assume $k_1\leq k_2$. By definition,
\begin{align*}
\cV_{k_1}^{(\a_1)}\cV_{k_2}^{(\a_2)}
=
\frac{1}{\prod\limits_{i=1,2}(k_i+1)!}
&
\sum_{\substack{(x_0,\ldots,x_{k_1})\in\eta_{t,\neq}^{(k_1+1)} \\ (x'_0,\ldots,x'_{k_2})\in\eta_{t,\neq}^{(k_2+1)}}} 
\D_{\d_t} [\{x_l\}_{l=0}^{k_1}]^{\a_1}
\D_{\d_t} [\{x'_l\}_{l=0}^{k_2}]^{\a_2}
\end{align*}
Here, $m$ points of the $k_1$-tuple and $k_2$-tuple may coincide, $m=0,\ldots k_1+1$. We assume that $x_{k_1-m+1}=x'_{k_1-m+1}, \dots, x_{k_1}=x'_{k_1}$, multiply by ${ k_1 +1 \choose m} \frac{(k_2+1)!}{(k_2-m+1)!}$, and rename the variables $(x'_0, \dots, x'_{k_1-m})$ by $ (x_{k_2+1}, \dots, x_{k_1+k_2-m+1})$ . This yields
\begin{align*}
\cV_{k_1}^{(\a_1)}\cV_{k_2}^{(\a_2)}
= &
\sum\limits_{m=0}^{k_1+1}
\frac{1}{m! \prod\limits_{i=1,2}(k_i-m+1)!}
\\ & \hskip0.5cm 
\sum_{(x_{0},\ldots,x_{k_1+k_2-m+1})\in\eta_{t,\neq}^{(k_1+k_2-m+2)}}
\D_{\d_t} [\{x_l\}_{l=0}^{k_1}]^{\a_1} 
  \D_{\d_t}[\{x_l\}_{l=k_1-m+1}^{k_1+k_2-m+1}]^{\a_2}
\end{align*}
and applying the Mecke formula gives
\begin{align*}
\E\cV_{k_1}^{(\a)}\cV_{k_2}^{(\a)}
=&
\sum_{m=0}^{k_1+1} \frac{t^{k_1+k_2-m+2}}{m! \prod\limits_{i=1,2}(k_i-m+1)!}
\\ & 
\int\limits_{W^{k_1+k_2-m+2}}
\D_{\d_t} [\{x_l\}_{l=0}^{k_1}]^{\a_1} 
\D_{\d_t} [\{x_l\}_{l=k_1-m+1}^{k_1+k_2-m+1}]^{\a_2}
\\ & \hskip6cm 
\dm x_0\cdots\dm x_{k_1+k_2-m+1}.
\end{align*}
The first term of this sum with $m=0$ equals $\E\cV_{k_1}^{(\a)}\E\cV_{k_2}^{(\a)}$ and thus the covariance is given by the summands from $m=1$ to $m=k_1+1$. To obtain the asymptotic behavior of the covariance we follow the same approach as in the proof of Theorem~\ref{thm:expt}. We substitute $\d_t(x_i-x_{k_1})$ for $x_i$, $i \neq k_1$ to get  
\begin{align*}
\Cov\big(\cV_{k_1}^{(\a_1)}, & \cV_{k_2}^{(\a_2)}\big)
=
\sum_{m=1}^{k_1+1} \frac{t^{k_1+k_2-m+2} \d_t^{d(k_1+k_2-m+1)+ \a_1 k_1+\a_2 k_2} }{m! \prod\limits_{i=1,2}(k_i-m+1)!}
\\ & 
\int\limits_W 
\int\limits_{(\d_t^{-1}(W -x_{k_1}) \cap B^d)^{k_1+k_2+1-m}}
\D_{1} [0,\{x_l\}_{l=0}^{k_1-1}]^{\a_1} 
\D_1 [0, \{x_l\}_{\substack{l=k_1-m+1\\l \neq k_1}}^{k_1+k_2-m+1}]^{\a_2}
\\ & \hskip4.5cm 
\dm x_0\cdots\dm x_{k_1-1}\, \dm x_{k_1+1}\cdots\dm x_{k_1+k_2-m+1} \, dx_{k_1}.
\end{align*}
For an upper bound we use $\d_t^{-1}(W -x_{k_1}) \cap B^d \subset B^d$ and obtain
\begin{align*}
\Cov\big(\cV_{k_1}^{(\a_1)},\cV_{k_2}^{(\a_2)}\big)
\leq &
\sum_{m=1}^{k_1+1} \frac{t^{k_1+k_2-m+2} \d_t^{d(k_1+k_2-m+1)+ \a_1 k_1+\a_2 k_2} }{m! \prod\limits_{i=1,2}(k_i-m+1)!}
\m_{k_1, k_2:m}^{(\a_1, \a_2)} .
\end{align*}
Once again, for the lower bound we consider $x_{k_1}\in W_{-\d_t}$, which yields
\begin{align*}
\Cov\big(\cV_{k_1}^{(\a_1)},\cV_{k_2}^{(\a_2)}\big)
\geq &
\sum_{m=1}^{k_1+1} \frac{t^{k_1+k_2-m+2} \d_t^{d(k_1+k_2-m+1)+ \a_1 k_1+\a_2 k_2} }{m! \prod\limits_{i=1,2}(k_i-m+1)!}
V(W_{-\d_t}) \m_{k_1, k_2:m}^{(\a_1, \a_2)}
\end{align*}
and using the estimate \eqref{eq:innpar} for $V(W_{-\d_t})$ proves Theorem \ref{thm:covar}.
\end{proof}

It turns out to be useful to distinguish the behavior of the covariance in the different \emph{regimes} already introduced by Penrose \cite{P03}. We say that we are in a sparse regime if $\lim t \d_t^d =0$ for $t \to \infty$, in the thermodynamic regime if $\lim t \d_t^d \in (0, \infty)$, and in the dense regime if $\lim t \d_t ^d= \infty$.
We assume again $k_1 \leq k_2$, $\a_i=0$ for $k_i>d$, and  $\min\{\a_1, \a_2, \a_1+\a_2\} > -d$.
\begin{theorem}\label{thm:covar-normalized}
Set 
\begin{equation}\label{def:Qi}
Q_i=
t^{\frac 12} \d_t^{ \a_i  k_i \,  } \max\limits_{1\leq m\leq k_i+1}\{(t \d_t^d)^{k_i-\frac {m-1}2} \} 
. 
\end{equation}
Define the normalized volume-power functional  by $\widehat{\cV}_{k_i}^{(\a_i)}= \cV_{k_i}^{(\a_i)}/Q_{i}$.
\begin{enumerate}[(i)]
\item 
In the sparse regime, where $\lim\limits_{t\rightarrow\infty}t\d_t^d=0$, we have
$\lim_{t \to \infty} \Cov\big(\widehat{\cV}_{k_1}^{(\a_1)},\widehat{\cV}_{k_2}^{(\a_2)}\big) =0
$
for $k_1 < k_2$, and for $k_1 = k_2=k$
\begin{eqnarray*}
\lim_{t \to \infty} \Cov\big(\widehat{\cV}_{k}^{(\a_1)},\widehat{\cV}_{k}^{(\a_2)}\big)
&=& 
\frac{  \m_{k}^{(\a_1 + \a_2)} }{(k+1)!} .
\end{eqnarray*}
\item 
In the dense regime, where $\lim\limits_{t\rightarrow\infty}t\d_t^d=\infty$, we have 
\begin{eqnarray}\label{eq:cov-ai-dense1}
\lim_{t \to \infty}  \Cov\big(\widehat{\cV}_{k_1}^{(\a_1)},\widehat{\cV}_{k_2}^{(\a_2)}\big)
&=& 
\frac{\m_{k_1}^{(\a_1)}}{k_1! }  \frac{\m_{k_2}^{(\a_2)}}{k_2!}  .
\end{eqnarray}
\item 
In the thermodynamic regime, where $\lim\limits_{t\rightarrow\infty}t\d_t^d=c\in(0,\infty)$, we have
\begin{eqnarray*}
\lim_{t \to \infty}
\Cov\big(\widehat{\cV}_{k_1}^{(\a_1)},\widehat{\cV}_{k_2}^{(\a_2)}\big)
&=&
\left\{\begin{array}{ll}
\sum\limits_{m=0}^{k_1} \frac{ \m_{k_1, k_2:k_1-m+1}^{(\a_1, \a_2)} }{(k_1-m+1)! m! (k_2-k_1+m)!} c^{\frac{k_2-k_1}2 +m},
&c \leq 1
\\[8pt]
\sum\limits_{m=0}^{ k_1} \frac{ \m_{k_1, k_2:m+1}^{(\a_1, \a_2)}
}{(m+1)! (k_1-m)!(k_2-m)!} c^{-m} ,
&c\geq 1.
\end{array} \right.
\end{eqnarray*}

$$
$$

\end{enumerate}
\end{theorem}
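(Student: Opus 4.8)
The plan is to start from the exact covariance expansion in Theorem~\ref{thm:covar} and simply track how each summand scales against the normalization $Q_1Q_2$, writing everything in terms of the single scaling quantity $u := t\d_t^d$. First I would factor the $m$-th summand of $\Cov(\cV_{k_1}^{(\a_1)},\cV_{k_2}^{(\a_2)})$ as
\[
T_m = C_m\; t\,\d_t^{\a_1 k_1+\a_2 k_2}\; u^{k_1+k_2-m+1},
\qquad
C_m := \frac{\m_{k_1,k_2:m}^{(\a_1,\a_2)}}{m!\,(k_1-m+1)!\,(k_2-m+1)!},
\]
which isolates a regime-independent prefactor $t\,\d_t^{\a_1k_1+\a_2k_2}$ and puts all the regime-sensitive behaviour into the single power $u^{k_1+k_2-m+1}$.

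Next I would rewrite the normalization in the same coordinates. By \eqref{def:Qi}, $Q_i = t^{1/2}\d_t^{\a_i k_i}\,g_i(u)$ with $g_i(u) := \max_{1\le m\le k_i+1} u^{k_i-(m-1)/2}$, and since the exponent $k_i-(m-1)/2$ decreases in $m$, one has $g_i(u)=u^{k_i}$ for $u\ge 1$ (maximum at $m=1$) and $g_i(u)=u^{k_i/2}$ for $u\le 1$ (maximum at $m=k_i+1$), a function continuous at $u=1$. Thus $Q_1Q_2 = t\,\d_t^{\a_1 k_1+\a_2 k_2}\,g_1(u)g_2(u)$, the prefactor cancels exactly, and
\[
\frac{T_m}{Q_1Q_2} = C_m\,\frac{u^{k_1+k_2-m+1}}{g_1(u)g_2(u)}.
\]
From here the three cases are read off directly. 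In the dense regime $u\to\infty$ gives $g_i(u)=u^{k_i}$, so the ratio equals $C_m\,u^{1-m}$; only $m=1$ survives, and there the two simplices share only the origin, so the integral factorizes as $\m_{k_1,k_2:1}^{(\a_1,\a_2)}=\m_{k_1}^{(\a_1)}\m_{k_2}^{(\a_2)}$, giving \eqref{eq:cov-ai-dense1}. In the sparse regime $u\to 0$ gives $g_i(u)=u^{k_i/2}$, so the ratio equals $C_m\,u^{(k_1+k_2)/2-m+1}$; the smallest exponent, attained at $m=k_1+1$, equals $(k_2-k_1)/2$, hence every term vanishes when $k_1<k_2$, while for $k_1=k_2=k$ only $m=k+1$ survives, where the two simplices coincide and $\m_{k,k:k+1}^{(\a_1,\a_2)}=\m_k^{(\a_1+\a_2)}$. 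In the thermodynamic regime $u\to c\in(0,\infty)$ no term vanishes, each power $u^{\bullet}$ converging to $c^{\bullet}$; substituting $g_i(u)=u^{k_i}$ for $c\ge 1$ and $g_i(u)=u^{k_i/2}$ for $c\le 1$ produces the two stated sums, which coincide at $c=1$ since there all powers of $c$ are one.

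I expect the only genuine work, and the main place for error, to be the bookkeeping rather than any estimate: the hard analytic content is already contained in Theorem~\ref{thm:covar}. Concretely I would (a) verify the two degeneracies $\m_{k_1,k_2:1}^{(\a_1,\a_2)}=\m_{k_1}^{(\a_1)}\m_{k_2}^{(\a_2)}$ and $\m_{k,k:k+1}^{(\a_1,\a_2)}=\m_k^{(\a_1+\a_2)}$ straight from the integral definition of $\m_{k_1,k_2:m}^{(\a_1,\a_2)}$, and (b) match my thermodynamic sum for $c\le 1$ to the displayed one through the reindexing $m\mapsto k_1-m+1$, checking that the three factorials and the exponent of $c$ transform consistently. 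Because the sum over $m$ is finite, the $1+o(1)$ factors of Theorem~\ref{thm:covar} pass through the limit termwise without difficulty, so no further uniformity argument is required.
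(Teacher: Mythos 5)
Your proposal is correct and follows essentially the same route as the paper's proof: both rewrite $Q_i$ as $t^{1/2}\d_t^{\a_i k_i}\max\{(t\d_t^d)^{k_i},(t\d_t^d)^{k_i/2}\}$, divide the covariance expansion of Theorem~\ref{thm:covar} termwise, and identify the dominant index ($m=k_1+1$ in the sparse regime, $m=1$ in the dense regime, all $m$ in the thermodynamic regime), using the same two degeneracies $\m_{k_1,k_2:1}^{(\a_1,\a_2)}=\m_{k_1}^{(\a_1)}\m_{k_2}^{(\a_2)}$ and $\m_{k,k:k+1}^{(\a_1,\a_2)}=\m_k^{(\a_1+\a_2)}$. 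Your bookkeeping in the single variable $u=t\d_t^d$ with $g_i(u)$, including the reindexing check for the $c\le 1$ sum, matches the paper's computation, so nothing further is needed.
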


\begin{proof}

Recall that $k_1 \leq k_2$. Because 
\begin{equation}\label{eq:Ql}
Q_i= t^{\frac 12} \d_t^{\a_i k_i}
\max \{  (t \d_t^d)^{ k_i}  ,
(t \d_t^d)^{\frac 12 k_i }  \}  
\end{equation}
the behavior in the sparse and dense regimes are immediate. 

In the sparse regime $t \d_t^d \to 0$, and hence as soon as $t \d_t^d <1$, the maximum is attained for $m=k_1+1$, resp. $m=k_2+1$. Thus 
$$
Q_1 Q_2
=
t \, \d_t^{ \sum \a_i  k_i   }  (t \d_t^d)^{\sum \frac {k_i}2}  
$$ 
and 
\begin{eqnarray*}
\Cov\big(\widehat{\cV}_{k_1}^{(\a_1)},\widehat{\cV}_{k_2}^{(\a_2)}\big)
&=&
\frac{1}{Q_1Q_2} \Cov\big(\cV_{k_1}^{(\a_1)},\cV_{k_2}^{(\a_2)}\big)
\\ &=& \nonumber
 \frac{ \m_{k_1, k_2:k_1+1}^{(\a_1, \a_2)}
 }{(k_1+1)! (k_2-k_1)!} (t\d_t^d)^{\frac {k_2-k_1}2 } 
(1+O(\d_t)+O(t \d_t^d)) .
\end{eqnarray*}
Hence for $k_1 < k_2$, asymptotically the covariance vanishes,
\begin{eqnarray}\label{eq:error-cov-sparse1}
\Cov\big(\widehat{\cV}_{k_1}^{(\a_1)},\widehat{\cV}_{k_2}^{(\a_2)}\big)
=
O((t\d_t^d)^{\frac {1}2 } ) .
\end{eqnarray}
In the case $k_1=k_2=k$, the covariance equals asymptotically a moment of the volume,
\begin{eqnarray}\label{eq:error-cov-sparse2}
\Cov\big(\widehat{\cV}_{k}^{(\a_1)},\widehat{\cV}_{k}^{(\a_2)}\big)
&=& \nonumber
\frac{ \m_{k}^{(\a_1+ \a_2)} }{(k+1)!} (1+O(\d_t + t \d_t^d))
\\ &=&
\E \widehat{\cV}_k^{(\a_1+\a_2)} (1+O(\d_t + t \d_t^d)) .
\end{eqnarray}
In the dense regime $t \d_t^d \to \infty$ and hence \eqref{eq:Ql} shows that the maximum is attained for $m=1$ as soon as $t \d_t^d >1$. Thus
$
Q_{1} Q_2
=
t \d_t^{ \sum \a_i  k_i \,  }  (t \d_t^d)^{\sum k_i} 
$
and 
\begin{eqnarray}\label{eq:error-cov-dense}
\Cov\big(\widehat{\cV}_{k_1}^{(\a_1)},\widehat{\cV}_{k_2}^{(\a_2)}\big)
&=& \nonumber
\frac{1}{Q_{1}Q_{2}}\Cov\big(\cV_{k_1}^{(\a_1)},\cV_{k_2}^{(\a_2)}\big)
\\&=& \nonumber 
\frac{ \m_{k_1, k_2:1}^{(\a_1, \a_2)} }{k_1! k_2!} (1+O(\d_t + (t \d_t^d)^{-1})) 
\\&=& 
\frac{ \m_{k_1}^{(\a_1)}\m_{k_2}^{(\a_2)}}{k_1! k_2!}
(1+O(\d_t + (t \d_t^d)^{-1}))  .
\end{eqnarray}
Note that in this case the limiting covariance is the product of the suitable normalized expectations, 
\begin{equation*}\label{eq:cov-ai-dense2}
\Cov\big(\widehat{\cV}_{k_1}^{(\a_1)},\widehat{\cV}_{k_2}^{(\a_2)}\big)
=
(k_1+1)(k_2+1) t^{-1} \E \widehat{\cV}_{k_1}^{(\a_1)} \E \widehat{\cV}_{k_2}^{(\a_2)} (1+o(1)) . 
\end{equation*}
In the thermodynamic regime, $t \d_t^d$ tends to a constant $c\in \R$, hence all terms in the sum occurring in the covariance contribute in the same way. 
Accordingly to the sparse regime we obtain for $c <1$
$$
Q_{1} Q_2
=
t \, \d_t^{ \sum \a_i  k_i   }  (t \d_t^d)^{\sum \frac {k_i}2}  
$$
for $t$ sufficiently large, and
$$\Cov\big(\widehat{\cV}_{k_1}^{(\a_1)},\widehat{\cV}_{k_2}^{(\a_2)}\big)
=\!\!
\sum_{m=1}^{k_1+1} \frac{ \m_{k_1, k_2:m}^{(\a_1, \a_2)} }{m! (k_1-m+1)! (k_2-m+1)!}
c^{\frac{k_1+k_2}2 -m+1} 
(1+o(1)) .
$$
Analogously, for $c \geq 1$ we obtain 
\begin{align*}
\Cov\big(\widehat{\cV}_{k_1}^{(\a_1)},\widehat{\cV}_{k_2}^{(\a_2)}\big)
= &
\sum_{m=1}^{k_1+1} \frac{ \m_{k_1, k_2:m}^{(\a_1, \a_2)}
 }{m! (k_1-m+1)! (k_2-m+1)!}
c^{-m+1} 
(1+o(1)) .
\end{align*}
In both cases we see that the error term $o(1)$ is given by 
\begin{equation}\label{eq:error-cov-thermo}
O(\d_t) + O(c-t\d_t^d) . 
\end{equation}

\end{proof}

Putting things together we obtain the limiting covariance matrix of the random vector $(\widehat{\cV}_{k_1}^{(\a_1)},\ldots,\widehat{\cV}_{k_n}^{(\a_n)})$.
For this we call $(k_1, \a_1), \dots, (k_n, \a_n)$ an \emph{admissible sequence} if 
\begin{enumerate}[(i)]
 \item $0 \leq k_1 \leq \dots \leq k_n$, 
 \item the pairs $(k_1, \a_1)$, \dots, $(k_n, \a_n)$ are distinct,
 \item $\a_i=0$ for $k_i >d$, and 
 \item $\min\{\a_i, \a_j, \a_i+\a_j\} > -d$ for all $i,j \in \{1, \dots, n\}$.
\end{enumerate}
First we rewrite the sum occurring in Theorem \ref{thm:covar-normalized} in the case $c<1$.
\begin{align*}
\sum\limits_{m=0}^{k_1} 
& 
\frac{ \m_{k_1, k_2:k_1-m+1}^{(\a_1, \a_2)} }{(k_1-m+1)! m! (k_2-k_1+m)!} c^{\frac{k_2-k_1}2 +m}
\\ = &
\sum_{m=0}^{\infty} 
\m_{k_l, k_1:\frac{(k_2+k_1-m+2)}2}^{(\a_1, \a_2)}
\frac{ \1(m -(k_2-k_1)\in \{0, 2,4, \dots, 2 k_1 \}) }{(\frac {k_2+k_1-m+2}2)! (\frac {m-k_2+k_1}2)! (\frac {m+k_2-k_1}2)!} 
\, c^{\frac m2}   \, .
\end{align*}
We define for $m=0, \dots, 2k_n$ the $(n \times n)$-matrices
\begin{equation}\label{def:A<1}
A^{<1}_m
=
\left(
\m_{k_l, k_j:\frac{(k_l+k_j-m+2)}2}^{(\a_1, \a_2)}
\frac{ \1(m -|k_l-k_j|\in \{0, 2,4, \dots, 2 \min k_i \}) }{(\frac {k_l+k_j-m+2}2)! (\frac {m-k_l+k_j}2)! (\frac {m+k_l-k_j}2)!} 
\right)_{l,j=1, \dots, n}
\end{equation}
with $\min k_i$ short for $\min_{i \in \{j,l\}} k_i$, and for $m=0, \dots , k_n$
\begin{equation}\label{def:A>1}
A^{>1}_m
=
\left(
\m_{k_l, k_j:m+1}^{(\a_l, \a_j)} \frac{\1(m \leq \min k_i)}{(m+1)! (k_l-m)!(k_j-m)!}\ 
\right)_{l,j=1, \dots, n} .
\end{equation}
Note that for large $m$ these matrices contain a large number of zeros. E.g., for $k_{n-1}<m \leq k_n$ the matrix $A^{>1}_{m}$  contains only one nonzero entry,
$$(A^{>1}_m)_{nn}=
\frac{ \m_{k_n, k_n:m+1}^{(\a_n, \a_n)} }{(m+1)! ((k_n-m)!)^2} .
$$

\medskip
In the following theorem we assume that $(k_1, \a_1)$, \dots, $(k_n, \a_n)$ is an admissible sequence.
Define the normalized volume-power functionals by $\widehat{\cV}_{k_i}^{(\a_i)}= \cV_{k_i}^{(\a_i)}/Q_{i}$
 with $Q_i $ defined in \eqref{def:Qi}.

\begin{theorem}\label{thm:covar-matrix}
The random vector $(\widehat{\cV}_{k_1}^{(\a_1)},\ldots,\widehat{\cV}_{k_n}^{(\a_n)})$ has the asymptotic covariance matrix 
\begin{equation}\label{eq:lim-cov-matrix}
\S =:
\left\{ 
\begin{array}{ll}
A^{<1}_0 & :\lim\limits_{t\rightarrow\infty}t\d_t^d=0 \\ 
\sum\limits_{m=0}^{2k_n}A^{<1}_m c^{\frac m2}  & :\lim\limits_{t\rightarrow\infty}t\d_t^d=c\in(0,1] \\ 
\sum\limits_{m=0}^{k_n}A^{>1}_m c^{-m} & :\lim\limits_{t\rightarrow\infty}t\d_t^d=c\in [1,\infty) \\
A^{>1}_0 & :\lim\limits_{t\rightarrow\infty}t\d_t^d=\infty 
\end{array}
\right. 
\end{equation}
\end{theorem}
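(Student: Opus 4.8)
The plan is to read each entry of $\S$ off the pairwise covariance limits established in Theorem \ref{thm:covar-normalized} and then to check that assembling these entries reproduces the four claimed matrices. Since the number $n$ of coordinates is fixed, convergence of the covariance matrix is equivalent to entrywise convergence, so no uniformity is required: the $(l,j)$ entry of $\S$ is exactly $\lim_{t\to\infty}\Cov\big(\widehat{\cV}_{k_l}^{(\a_l)},\widehat{\cV}_{k_j}^{(\a_j)}\big)$. Crucially, $Q_i$ in \eqref{def:Qi} depends only on the pair $(k_i,\a_i)$ and not on its partner, so the normalization used to build the matrix is precisely the one in Theorem \ref{thm:covar-normalized}, and that theorem may be applied verbatim to each pair. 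The ordering $k_1\le\dots\le k_n$ yields $k_l\le k_j$ whenever $l\le j$, and the symmetry $\m_{k_l,k_j:m}^{(\a_l,\a_j)}=\m_{k_j,k_l:m}^{(\a_j,\a_l)}$ together with the manifest symmetry of \eqref{def:A<1} and \eqref{def:A>1} makes the bookkeeping consistent for $l>j$ as well.

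For the two extreme regimes the verification reduces to identifying two boundary moments. In the sparse regime Theorem \ref{thm:covar-normalized}(i) gives $0$ when $k_l\neq k_j$ and $\m_k^{(\a_l+\a_j)}/(k+1)!$ when $k_l=k_j=k$; I check that this is $A^{<1}_0$ by noting that for $m=0$ the indicator in \eqref{def:A<1} forces $|k_l-k_j|=0$, killing the off-diagonal blocks, while in the coinciding case the subscript collapses to $\m_{k,k:k+1}^{(\a_l,\a_j)}$, the moment of a single fully merged simplex, which equals $\m_k^{(\a_l+\a_j)}$. In the dense regime Theorem \ref{thm:covar-normalized}(ii) gives $\m_{k_l}^{(\a_l)}\m_{k_j}^{(\a_j)}/(k_l!\,k_j!)$; this matches $A^{>1}_0$ once I observe that $\m_{k_l,k_j:1}^{(\a_l,\a_j)}$, the mixed moment of two simplices sharing only the origin, factorizes as $\m_{k_l}^{(\a_l)}\m_{k_j}^{(\a_j)}$ by independence of the two vertex blocks.

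For the thermodynamic regimes I match the two sums of Theorem \ref{thm:covar-normalized}(iii) against the matrix series term by term. When $c\ge 1$ the identification is immediate: the $(l,j)$ entry of $\sum_{m=0}^{k_n}A^{>1}_m c^{-m}$ is exactly the displayed sum, with the indicator $\1(m\le\min\{k_l,k_j\})$ reproducing the true upper limit $k_1=\min\{k_l,k_j\}$, so that extending the outer summation to $k_n$ only appends zeros. When $c\le 1$ I use the reindexing $m'=2m+(k_j-k_l)$ already carried out in the display preceding \eqref{def:A<1}: it sends $c^{(k_j-k_l)/2+m}$ to $c^{m'/2}$, turns the subscript $k_l-m+1$ into $\tfrac{k_l+k_j-m'+2}{2}$, converts the denominator $(k_l-m+1)!\,m!\,(k_j-k_l+m)!$ into the product of the three factorials displayed in \eqref{def:A<1}, and sends the range $0\le m\le k_1$ to the parity-and-range condition carried by the indicator; hence the $(l,j)$ entry of $\sum_{m=0}^{2k_n}A^{<1}_m c^{m/2}$ equals the $c\le 1$ covariance, the bound $2k_n$ being harmless since the indicator already vanishes once $m'>k_l+k_j$.

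The step I expect to demand the most care is this last combinatorial bookkeeping in the $c\le 1$ case. I must confirm that $\1\big(m-|k_l-k_j|\in\{0,2,\dots,2\min\{k_l,k_j\}\}\big)$ is exactly the condition making all three factorial arguments in \eqref{def:A<1} non-negative integers: the parity part forces $m+k_l+k_j$ to be even, so that $\tfrac{k_l+k_j-m+2}{2}$ is an integer, while the interval keeps each argument non-negative and bounded, so that any $m$ outside the set would produce a half-integer or a negative argument and is correctly annihilated. Once this is checked for an arbitrary pair — in particular for $k_l\neq k_j$, where the half-integer exponent shift $(k_j-k_l)/2$ in the power of $c$ is absorbed by the substitution — the four cases of \eqref{eq:lim-cov-matrix} follow by reading the limits of the preceding paragraphs into the matrix entries.
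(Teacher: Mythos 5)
Your proposal is correct and takes essentially the same route as the paper, which derives Theorem \ref{thm:covar-matrix} by exactly this entrywise application of Theorem \ref{thm:covar-normalized} (legitimate since each $Q_i$ in \eqref{def:Qi} depends only on its own pair $(k_i,\a_i)$) combined with the reindexing $m'=2m+(k_j-k_l)$ carried out in the display preceding \eqref{def:A<1}. Your additional verifications --- the collapse $\m_{k,k:k+1}^{(\a_l,\a_j)}=\m_{k}^{(\a_l+\a_j)}$ in the sparse case, the factorization $\m_{k_l,k_j:1}^{(\a_l,\a_j)}=\m_{k_l}^{(\a_l)}\m_{k_j}^{(\a_j)}$ in the dense case, and the parity-and-range bookkeeping for the indicator --- simply make explicit the steps the paper leaves implicit.
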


Clearly, in the case $c=1$ the identity 
$\sum A^{<1}_m = \sum A^{>1}_m$
is satisfied which follows from the definitions \eqref{def:A<1} and \eqref{def:A>1}.

By Theorem \ref{thm:covar-normalized}, \eqref{eq:cov-ai-dense1}, the matrix $A^{>1}_0$ takes the form of a tensor product.
$$
A^{>1}_0 = \begin{pmatrix} \vdots\\ \frac{ \m_{k_i}^{(\a_i)} }{k_i!} \\ \vdots \end{pmatrix} \! \otimes  \!
\begin{pmatrix} \vdots\\ \frac{ \m_{k_i}^{(\a_i)} }{k_i!} \\ \vdots \end{pmatrix} 
 $$
Hence, in the dense case the covariance matrix $\S$ is of rank 1, and thus is singular in this regime. 

Also, the covariance matrix $A^{<1}_0 $ takes a particular nice form. Using \eqref{eq:error-cov-sparse2} we see that
\begin{equation*}
A^{<1}_0
=
\left(
\m_{k_j}^{(\a_1+ \a_2)}
\frac{ \1( k_l=k_j) }{(k_j+1)! } 
\right)_{l,j=1, \dots, n}
\end{equation*}
is a diagonal block matrix. A block is of size $i$ if $k_{m}=\dots=k_{m+i-1}$, and then is a constant times the matrix
$$
\left( \E \D_1^{(\a_l+\a_j)} \right)_{l,j=m, \dots , m+i-1}
$$
with
$ \D_1 = \D_1 [0, \{X_l\}_{l=1}^{k_m} ]$. 
Thus each block is a generalized moment matrix, and we known by Theorem~\ref{thm:rank-moment-matrix} that this is of full rank if $\a_l \neq \a_j$ for $l \neq j$.
Since all $\a_i$ are distinct, $A^{<1}_0$ is of full rank.

Further, on $c \in [0,1]$ the determinant $|\S|$ of the covariance matrix $\S$ is a polynomial in $c$ with $\lim_{c \to 0} |\S| \to |A^{<1}_0| >0$ and thus this polynomial is not trivial. Hence it has at most finitely many zeros. Analogously, for $c \in [1, \infty)$ the determinant of $\S$ is a polynomial in $c$. Because $\widehat \cV_{k_i}^{\a_i}$ on $c \geq 1$ is just a renormalized version of $\widehat \cV_{k_i}^{\a_i}$ on $c \leq 1$ the polynomial is not trivial. (In the limit the renormalizations are just multiplications by $c^{\frac{k_i}2}$.) Hence there are again only finitely many zeros of this polynomial.  
We summarize our findings.

\begin{corollary}\label{cor:rank}
The rank of $\S$ equals $n$ in the sparse regime. In the thermodynamic regime $\S$ is of rank $n$ except for finitely many values of $c$. In the dense regime $\S$ is of rank one.
\end{corollary}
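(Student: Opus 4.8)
The plan is to handle the four rows of \eqref{eq:lim-cov-matrix} separately, since the structure of $\S$ differs by regime; in fact every ingredient has essentially been prepared in the paragraphs following Theorem~\ref{thm:covar-matrix}, and the proof only has to assemble them. I would begin with the dense regime, which is quickest: there $\S=A^{>1}_0$, and by the remark after Theorem~\ref{thm:covar-matrix} this matrix is the outer product $v\otimes v$ of the vector $v=(\m_{k_i}^{(\a_i)}/k_i!)_{i=1}^n$ with itself. All coordinates $\m_{k_i}^{(\a_i)}$ are strictly positive, so $v\neq 0$ and $\S$ has rank exactly one.

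For the sparse regime, $\S=A^{<1}_0$. I would use the block-diagonal form already exhibited: grouping the indices into maximal runs sharing a common value $k$, the matrix is block diagonal, the block attached to $k$ being $\frac{1}{(k+1)!}\bigl(\E\,\D_1[0,\{X_l\}_{l=1}^{k}]^{\a_l+\a_j}\bigr)_{l,j}$. This is precisely a generalized moment matrix of the random variable $X=\D_1[0,\{X_l\}_{l=1}^{k}]$ with exponents $c_i=\a_i$ in the sense of Theorem~\ref{thm:rank-moment-matrix}. Within one block the values $k_i$ coincide, so by admissibility~(ii) the exponents $\a_i$ are pairwise distinct; and the support of $X$ contains an interval, since the positive-volume simplices produced by independent uniform vertices have volumes filling a nondegenerate interval. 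Hence Theorem~\ref{thm:rank-moment-matrix} makes every block nonsingular, $A^{<1}_0$ is of full rank, and $\S$ has rank $n$.

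The thermodynamic regime is the core of the statement, and the step I expect to be the main obstacle. Here I would show that the singular locus is the zero set of a nonzero polynomial. On $c\in(0,1]$ the entries of $\S=\sum_{m=0}^{2k_n}A^{<1}_m c^{m/2}$ are polynomials in $s=\sqrt c$, so $\det\S$ is a polynomial in $s$; it is not identically zero because $\lim_{c\to 0}\det\S=\det A^{<1}_0>0$ by the sparse analysis, and therefore has only finitely many roots. On $c\in[1,\infty)$ the entries of $\S=\sum_{m=0}^{k_n}A^{>1}_m c^{-m}$ are polynomials in $c^{-1}$, so $\det\S$ is a polynomial in $c^{-1}$, and it remains to check that this polynomial is nontrivial.

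To do so I would compare the two normalizations at $c=1$: passing from the $c\le 1$ to the $c\ge 1$ normalization of $\widehat{\cV}_{k_i}^{(\a_i)}$ rescales the $i$-th coordinate by a nonzero factor (asymptotically the power $c^{k_i/2}$), so that the $c\ge1$ matrix is obtained from the $c\le1$ one by conjugation with $\mathrm{diag}(c^{k_i/2})$. This is a congruence, which multiplies $\det\S$ by $\bigl(\prod_i c^{k_i/2}\bigr)^2\neq 0$ and hence preserves rank at every fixed $c>0$; together with the matching identity $\sum_m A^{<1}_m=\sum_m A^{>1}_m$ noted after Theorem~\ref{thm:covar-matrix} it identifies the two descriptions at $c=1$. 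Since the determinant on the $c\le1$ side is nonzero for all but finitely many $c$ near $1$, the matrix is full rank on one side, hence on the other, so the polynomial in $c^{-1}$ does not vanish identically and has finitely many roots. The delicate point is precisely making this transfer of nondegeneracy across $c=1$ watertight; once it is, the two ranges show $\det\S$ vanishes at only finitely many $c\in(0,\infty)$, and the three regime statements combine to yield exactly the claimed ranks.
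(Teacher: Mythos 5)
Your proposal is correct and follows essentially the same route as the paper's own argument: rank one via the tensor-product form of $A^{>1}_0$ in the dense regime, full rank of the block-diagonal $A^{<1}_0$ through the generalized moment matrix criterion of Theorem~\ref{thm:rank-moment-matrix} in the sparse regime, and in the thermodynamic regime the nontriviality of the polynomial determinant, with the $c\geq 1$ branch reduced to the $c\leq 1$ branch by the renormalization factors $c^{k_i/2}$. If anything, you are slightly more careful than the paper at two points it leaves implicit: that admissibility~(ii) forces the exponents $\a_i$ to be pairwise distinct \emph{within each block} (the paper loosely says ``all $\a_i$ are distinct''), and that the rescaling is a congruence by $\mathrm{diag}(c^{k_1/2},\dots,c^{k_n/2})$ valid at every fixed $c>0$, which makes the transfer of nondegeneracy across $c=1$ explicit.
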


\subsection{The face numbers of the simplicial complex}

If we are interested only in the combinatorial structure of the simplicial complex we have to consider in particular the case $\a=\a_i=0$. In this case, $\cV^{(0)}_k=f_k(\cR(\eta_t,\d_t))$ are just the entries of the  $f$-vector of the simplicial complex.
The constants simplify slightly, giving
\begin{equation*}
\m_{k}^{(0)} = \k_d^k\  \P (\D_{1}[0, \{ X_l\}_{l=1}^k]\neq 0)
\end{equation*}
which just is the probability that $k$ random points in $B^d$ have mutual distances at most one, 
and, analogously,
\begin{align*}\label{def:C00kkm}
\m_{k_1, k_2:m}^{(0,0)}
=& 
\k_d^{k_1+k_2+1-m}
\P (\D_{1} [0,\{x_l\}_{l=1}^{k_1}] \neq 0,\, 
\D_{1} [0, \{x_l\}_{l=k_1-m+2}^{k_1+k_2-m+1}]\neq 0) .
\end{align*}

The $\bm f$-vector $(f_k)_{k \geq 0}= (f_k(\cR(\eta_t,\d_t)))_{k \geq 0}$ of the Vietoris-Rips complex satisfies 
$$\E f_k 
=
\frac{\m_k^{(0)}}{(k+1)!} \, t(t\d_t^d)^{k} (1+O(\d_t)),$$
and similar formulae hold for the covariance matrix of $\widehat f_k$ by Theorem \ref{thm:covar-matrix}.

\section{Central Limit Theorems}

\subsection{Some Results for Poisson U-statistics}

A Poisson U-statistic is absolutely convergent if $F= \sum _{\eta_{t,\neq}^k} |f(x_1, \dots, x_k)|$ is in $L^2(\P)$.  
Note that $\widehat{\cV}_k^{\a}$ is an absolutely convergent U-statistic since all occurring functions are bounded and vanish outside the compact convex set $W$. 
Let $F^{(1)},\ldots,F^{(n)}$ be absolutely convergent Poisson U-statistics of order $k_1,\ldots,k_n$ respectively, 
$$
F^{(l)}=\sum\limits_{(x^{(l)}_1,\ldots,x^{(l)}_{k_l})\in\eta_{t,\neq}^{(k_l)}}f^{(l)}(x^{(l)}_1,\ldots,x^{(l)}_{k_l})
$$ 
for $l=1,\ldots,n$. It will be essential to define suitable partitions on the set of variables
$ \{x^{(l)}_1,\ldots,x^{(l)}_{k_l}\}$, $l=1,\ldots,n$,  
of $f^{(l)}:W^{k_l}\rightarrow\bar{\R}$. 

Let $\cP(A)$ stand for the set of partitions of an arbitrary set $A$. Then $|\s|$ represents the number of blocks in a partition, $\s\in\cP(A)$. A partial order is defined on $\cP(A)$ such that $\s\leq\tau$ if each block of $\s$ is contained in a block of $\tau$, for $\s,\tau\in\cP(A)$. The minimal partition $\hat{0}$ is the partition whose blocks are singletons, and the maximal partition $\hat{1}$ is the partition with a single block. For two partitions $\s,\tau\in\cP(A)$, $\s\wedge\tau$ is the maximal partition in $\cP(A)$ such that $\s\wedge\tau\leq\s$ and $\s\wedge\tau\leq\tau$, and $\s\vee\tau$ is the minimal partition in $\cP(A)$ such that $\s \leq \s\vee\tau$ and $\tau \leq \s\vee\tau$. 

Set $V(k_1,\ldots,k_4)=\Big\{x^{(1)}_1,\ldots,x^{(1)}_{k_1},x^{(2)}_{1},\ldots,x^{(3)}_{k_{3}},x^{(4)}_1,\ldots,x^{(4)}_{k_4}\Big\}$, which consists of four sets of variables, and let $\bar{\pi}\in\cP(V(k_1,\ldots,k_4))$ be the partition whose blocks are the fundamental building blocks $\{x^{(l)}_1,\ldots,x^{(l)}_{k_l}\}$, $l=1,\ldots,4$. 

The set 
$$
\tilde \Pi(k_1,\ldots,k_4)
=
\Big\{\s\in\cP(V(k_1,\ldots,k_4))\colon \s\wedge\bar{\pi}=\hat{0},\, \s\vee\bar{\pi}=\hat{1} \Big\}
$$
is the set of all partitions such that each block contains at most one element from each of the building blocks $\{x^{(l)}_1,\ldots,x^{(l)}_{k_l}\}$, $l=1,\ldots,4$, and all four fundamental blocks are connected. Clearly, for $\s \in \tilde \Pi(k_1, \dots, k_4)$  it may happen that some variables are singletons, and we define $s(\s)=(s_1,\dots, s_4)$ to be the vector consisting of the number of singletons in each of the building blocks. 

In the following we need the notion of a 4-fold tensor product, $\otimes_{l=1}^4 f^{(l)}:W^{\sum_{l=1}^4 k_l}\rightarrow \R$, of functions $f^{(l)}$, given by
$$
\left( \otimes_{l=1}^4 f^{(l)} \right) (x^{(1)}_1,\ldots,x^{(4)}_{k_4})
=
\prod_{l=1}^{4} f^{(l)}(x^{(l)}_1,\ldots,x^{(l)}_{k_l}).
$$
For a partition, $\s\in\Pi(k_1,\ldots,k_4)$, we construct a new function, $(\otimes_{l=1}^mf^{(l)})_{\s}:W^{|\s|}\rightarrow\bar{\R}$, by replacing all variables that belong to the same block of $\s$ by a new common variable. We refer to \cite{S13} for more details and some examples. 
Finally we are able to introduce the functions $M_{ij}$ defined in Reitzner and Schulte \cite{RS13} in the univariate case and by Schulte \cite{S13} for the multivariate case. The functions are given by\footnote{Remark that by Fubinis theorem one can integrate first the functions $f^{(l)}$  over the $k_l-i$ free variables, i.e. singletons, which produces reduced functions $f_i^{(l)}$, and analogously the functions $f^{(m)}$  over the $k_m-j$ free variables producing $f_j^{(m)}$. In this form the result was stated in \cite{S13}.}
$$
M_{ij}(f^{(l)},f^{(m)})
=
\sum\limits_{\stackrel{\s\in\tilde{\Pi}(k_l,k_l,k_m,k_m)}{s(\s)=(k_l -i, k_l -i, k_m -j, k_m -j)}}
\int\limits_{W^{|\s|}}
|( f^{(l)} \otimes f^{(l)} \otimes f^{(m)} \otimes f^{(m)})_{\s}|\ d\mu^{|\s|}
$$
where in our case $d\mu$ is the intensity measure $t dx$. Apart from the precise definition given above the main point is that the functions $M_{ij}$ is something like a mixed fourth moment of $f^{(l)}$ and $f^{(m)}$ where all functions are linked via the common use of some of the variables. 
These functions $M_{ij}$ are the main ingredients in the following two central limit theorem.

\medskip
The univariate central limit theorem uses the Kolmogorov distance $d_K$ and the Wasserstein distance $d_W$ of random variables. For the Wasserstein distance the following theorem was proved  in Reitzner and Schulte \cite {RS13} using the Wiener-It\^{o} chaos expansion, and extended to the Kolmogorov distance by Schulte \cite{S16}.

\begin{theorem}\label{thm:uni-clt}
Let $F=F(\eta_t) \in L^2(\P)$ be an absolutely convergent U-statistic of order $k$ with $\Var F >0$, and $N$ be a standard Gaussian random variable. Then for $d_{\star}= d_W$ or $d_{\star}=d_K$ there is a constant $c_k$ such that 
$$
d_{\star}\bigg(\frac{F-\E F}{\sqrt{\Var\,F}}, N\bigg)\leq 
c_k \sum_{i,j=1}^k  \frac{\sqrt{M_{ij}(f,f)}}{\Var F} \ . 
$$
\end{theorem}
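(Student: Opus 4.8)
The plan is to invoke the Malliavin--Stein method for Poisson functionals, which is the route taken in \cite{RS13, S16}. The starting point is that an absolutely convergent U-statistic of order $k$ possesses a \emph{finite} Wiener--It\^o chaos expansion
$$
F = \E F + \sum_{n=1}^{k} I_n(f_n),
$$
where $I_n$ denotes the $n$-th multiple Wiener--It\^o integral with respect to the compensated Poisson process and the symmetric kernels $f_n$ arise from $f$ by integrating out $k-n$ of the variables against the intensity measure $t\,\dm x$ (an application of the Mecke formula \eqref{eq:Mecke}). The decisive structural feature is that no chaos of order exceeding $k$ appears, so that all subsequent estimates involve only finitely many terms.

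Next I would apply the general normal-approximation bound for Poisson functionals. Writing $\tilde F = (F-\E F)/\sqrt{\Var F}$, one controls $d_W(\tilde F, N)$ by quantities of the form $\E\big[(\Var F - \langle DF, -DL^{-1}F\rangle)^2\big]^{1/2}$ together with an integrated third-moment term $\int_W \E\big[|D_xF|^2\,|D_x L^{-1}F|\big]\, t\, \dm x$, where $D$ is the Malliavin derivative and $L^{-1}$ the pseudo-inverse of the Ornstein--Uhlenbeck generator; this is the bound of Peccati, Sol\'e, Taqqu and Utzet, refined to the Kolmogorov distance $d_K$ by Schulte \cite{S16} at the cost of one further, structurally similar summand. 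Inserting the chaos expansion and invoking the product formula for multiple integrals turns $DF$ and $DL^{-1}F$ into finite sums of multiple integrals of the $f_n(x,\cdot)$, and after taking expectations every such quantity reduces to an $L^2$-norm of a contraction kernel $f_n \star_l^r f_m$.

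The heart of the argument --- and the step I expect to be the main obstacle --- is to dominate each contraction norm $\|f_n \star_l^r f_m\|$ by a constant multiple of $\sqrt{M_{ij}(f,f)}$ for suitable indices $i,j$. Writing $\|f_n \star_l^r f_m\|^2$ out as an integral over a configuration of repeated and free variables, one recognises the emerging index structure as a partition $\s \in \tilde\Pi(k,k,k,k)$ whose singleton vector is $s(\s)=(k-i,k-i,k-j,k-j)$. Replacing the integrand by its absolute value $|(f\otimes f\otimes f\otimes f)_\s|$ and comparing with the definition of $M_{ij}$ furnishes the domination; the genuinely delicate part is the combinatorial bookkeeping of which contraction feeds into which partition class and the verification that every term produced by the product formula is accounted for.

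Finally I would collect the finitely many contributions, track the powers of $\sqrt{\Var F}$ introduced by the normalisation $\tilde F$, and absorb all purely combinatorial constants (depending only on $k$) into a single $c_k$. Since $M_{ij}$ already carries the correct scaling in $t$ through $\dm\mu = t\,\dm x$, all contributions collapse to the common form $\sqrt{M_{ij}(f,f)}/\Var F$, yielding the asserted estimate simultaneously for $d_\star = d_W$ and $d_\star = d_K$.
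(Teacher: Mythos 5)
Your proposal is correct and follows precisely the route of the actual proof: the paper itself does not prove Theorem \ref{thm:uni-clt} but imports it from Reitzner--Schulte \cite{RS13} (Wasserstein distance) and Schulte \cite{S16} (Kolmogorov distance), and those references proceed exactly as you describe --- finite Wiener--It\^o chaos expansion of the absolutely convergent U-statistic with kernels $f_n$ obtained by integrating out $k-n$ variables against $t\,\dm x$, the Malliavin--Stein bound of Peccati--Sol\'e--Taqqu--Utzet (extended to $d_K$ in \cite{S16}), the product formula reducing all terms to contraction norms $\|f_n\star_l^r f_m\|$, and the domination of each contraction norm by the partition-indexed quantities $M_{ij}$ with singleton vector $(k-i,k-i,k-j,k-j)$. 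The only cosmetic slip is the normalisation in your second-order term, which for $\tilde F=(F-\E F)/\sqrt{\Var F}$ should read $\E\big[(1-\langle D\tilde F,-DL^{-1}\tilde F\rangle)^2\big]^{1/2}$; this is absorbed in the final bookkeeping and does not affect the argument.
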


The mulitvariate central limit theorem makes use of the $d_3$-distance, which is obtained by taking ${\cal C}^3_{1}$ to be the set of functions $g:\R^n\rightarrow\R$ that are three times differentiable and all partial derivatives of order 2 and 3 are bounded by 1.

\begin{definition}\label{def:dist2}
Let $F,G$ be two $n$-dimensional random vectors. 
The $d_3$ distance is given by
$$d_3(F,G)=\sup_{g\in\mathcal{C}^3_{1}} |\E g(F)-\E g(G)|
. $$
\end{definition}
Next we state the central limit theorem proven by Schulte \cite{S13} which will be useful in computing the $d_3$-distance in the next section.
\begin{theorem}\label{thm:mlt}
Let $\bm{F}=(F^{(1)},\ldots,F^{(n)})$ be a vector of absolutely convergent Poisson $U$-statistics of orders $k_1,\dots,k_n$,  
$$F^{(l)}=\sum_{(x_1,\ldots,x_{k_l})\in\eta_{t,\neq}^{k_l}} f^{(l)}(x_1,\ldots,x_{k_l})  . $$
And let $\bm{N}(\Sigma)$ be an $n$-dimensional centered Gaussian random vector with a positive semidefinite covariance matrix $\Sigma$. Then 
\begin{align*}
d_3\bigg(\bm{F}-\E\bm{F},\bm{N}(\Sigma)\bigg)
&\leq
\frac{1}{2}\sum\limits_{l,m=1}^{n}|\sigma_{lm}-\Cov (F^{(l)},F^{(m)})|
\\&+
\frac{n}{2}\bigg(\sum\limits_{l=1}^{n}\sqrt{\Var F^{(l)}}+1\bigg)
\sum\limits_{l,m=1}^{n} \sum\limits_{i=1}^{k_l}\sum\limits_{j=1}^{k_m} 
k_l^{\frac{7}{2}}\sqrt{M_{ij}(f^{(l)},f^{(m)})}.
\end{align*}
\end{theorem}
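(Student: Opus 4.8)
The plan is to prove this via the Malliavin--Stein method for Poisson functionals, following the general strategy of \cite{RS13, S13}. The starting point is the multivariate Stein bound for the $d_3$-distance: for a vector $\bm F$ of $L^2$ Poisson functionals and a centered Gaussian vector $\bm N(\Sigma)$, the smart-path interpolation yields an estimate of the form
\begin{equation*}
d_3(\bm F - \E \bm F, \bm N(\Sigma))
\le
\frac12 \sum_{l,m=1}^n \E\big| \sigma_{lm} - \langle D F^{(l)}, -D L^{-1} F^{(m)} \rangle \big|
+ R,
\end{equation*}
where $D$ is the Malliavin difference operator, $L^{-1}$ the inverse Ornstein--Uhlenbeck generator, and $R$ collects third-order terms built from products of difference operators $D_x F^{(l)}$. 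First I would split the covariance-matching term by the triangle inequality, using the identity $\Cov(F^{(l)},F^{(m)}) = \E\langle DF^{(l)}, -DL^{-1}F^{(m)}\rangle$: replacing $\sigma_{lm}$ by $\Cov(F^{(l)},F^{(m)})$ produces the first sum $\frac12\sum_{l,m}|\sigma_{lm}-\Cov(F^{(l)},F^{(m)})|$ exactly as in the statement, and leaves the \emph{fluctuation} of the Malliavin inner product around its mean, which by Jensen is bounded by its standard deviation and thus joins the error term.

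Next I would exploit that each $F^{(l)}$ is a U-statistic of finite order $k_l$, so its Wiener--It\^{o} chaos expansion terminates: $F^{(l)} - \E F^{(l)} = \sum_{q=1}^{k_l} I_q(g_q^{(l)})$ with explicit symmetric kernels $g_q^{(l)}$ obtained from $f^{(l)}$ by the usual reduction (integrating out the free variables). The operators act diagonally on chaos, $D_x I_q(g) = q\,I_{q-1}(g(x,\cdot))$ and $-L^{-1} I_q(g) = q^{-1} I_q(g)$, so both the variance of the inner product and the remainder $R$ can be expanded, via the product formula for multiple Poisson integrals, into a sum of $L^2$-norms of \emph{contraction kernels} $f^{(l)} \star_r^s f^{(m)}$. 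Each such norm, after squaring and writing it as an integral of a product of four copies of the kernels over a shared set of variables, is precisely of the shape appearing in the definition of $M_{ij}$; bounding the third-order term $R$ by Cauchy--Schwarz is what produces the combinatorial prefactors $k_l^{7/2}$ and the variance factors $\sqrt{\Var F^{(l)}}$.

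The technical heart is therefore the combinatorial identification in the last step: matching the contraction norms to the partition sums $M_{ij}(f^{(l)}, f^{(m)})$. Writing $\|f^{(l)}\star_r^s f^{(m)}\|^2$ as an integral over $W^{|\s|}$ of $|(f^{(l)}\otimes f^{(l)}\otimes f^{(m)}\otimes f^{(m)})_{\s}|$, the two defining constraints on $\tilde\Pi(k_l,k_l,k_m,k_m)$ are exactly what select the contributing terms: $\s\wedge\bar{\pi} = \hat{0}$ forces each block to meet each fundamental block at most once (the identification of distinct variables in a contraction), while $\s\vee\bar{\pi} = \hat{1}$ forces all four copies to be linked (so the term genuinely contributes to the mixed fourth moment rather than factorizing). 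The singleton vector $s(\s)=(k_l-i,k_l-i,k_m-j,k_m-j)$ records how many variables remain free, which is the bookkeeping that organizes the sum by $(i,j)$. I expect this partition-to-contraction dictionary, together with the careful tracking of the constants, to be the main obstacle; once the kernels and their contractions are in hand, the analytic estimates are routine, and absolute convergence of the U-statistics guarantees all the norms involved are finite.
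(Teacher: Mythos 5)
The paper itself gives no proof of this theorem: it is quoted as known from Schulte \cite{S13}, whose argument rests on the multivariate Malliavin--Stein ($d_3$) smart-path bound of Peccati and Zheng, the terminating Wiener--It\^{o} chaos expansion of absolutely convergent Poisson U-statistics, and the contraction-norm estimates of \cite{RS13}. Your proposal reconstructs exactly that route --- splitting off $\tfrac12\sum_{l,m}|\sigma_{lm}-\Cov(F^{(l)},F^{(m)})|$ via the identity $\Cov(F^{(l)},F^{(m)})=\E\langle DF^{(l)},-DL^{-1}F^{(m)}\rangle$, then dominating the fluctuation and third-order remainder by contraction norms of the reduced kernels and identifying those with the partition sums $M_{ij}$ (with $\s\wedge\bar{\pi}=\hat{0}$ and $\s\vee\bar{\pi}=\hat{1}$ playing precisely the roles you describe) --- so it is essentially the same approach as the cited proof, correctly outlined.
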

We shall bound the terms on the right hand side in the next section.

\subsection{Central Limit Theorems for $\cV_k^{\a}$}
In the following theorem we assume that $\a  > - \frac d2$ for $0 \leq k \leq d$ and $\a=0$ for $k>d$. 

\begin{theorem}\label{thm:uni-clt-V}
Let $N$ be a standard Gaussian random variable. Then for $d_{\star}= d_W$ or $d_{\star}=d_K$ there is a constant $c_{k,\a}$ such that 
$$
d_{\star} \bigg(\frac{{\cV}_k^{\a}-\E {\cV}_k^{\a}}{\sqrt{\Var\,{\cV}_k^{\a}}}, N\bigg)
\leq 
c_{k,\a}  t^{- \frac 12} \max\{( t \d_t^d)^{- \frac k2 } , 1\} . 
$$
\end{theorem}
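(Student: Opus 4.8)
The plan is to apply the univariate central limit theorem for absolutely convergent Poisson U-statistics, Theorem \ref{thm:uni-clt}, to the functional $\cV_k^{(\a)}$, which by \eqref{def:Vka} is a Poisson U-statistic of order $k+1$ with kernel $f(x_0,\dots,x_k)=\frac{1}{(k+1)!}\D_{\d_t}[x_0,\dots,x_k]^{\a}$. The bound in Theorem \ref{thm:uni-clt} is controlled by $\sum_{i,j}\sqrt{M_{ij}(f,f)}/\Var\,\cV_k^{(\a)}$, so the task reduces to estimating the mixed-moment quantities $M_{ij}(f,f)$ from above and invoking the variance asymptotics from Theorem \ref{thm:covar} in the denominator. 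The assumption $\a>-\tfrac d2$ is exactly what is needed so that the relevant integrals of $\D^{\a}$ converge (note $2\a>-d$), matching the integrability condition under which $\Var\,\cV_k^{(\a)}$ in Theorem \ref{thm:covar} is well-defined.

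First I would estimate $M_{ij}(f,f)$. By its definition, $M_{ij}(f,f)$ is a sum over partitions $\s\in\tilde\Pi(k{+}1,k{+}1,k{+}1,k{+}1)$ with a prescribed singleton vector of integrals of $|(f\otimes f\otimes f\otimes f)_{\s}|$ against $(t\,dx)^{|\s|}$. Each such integral factorizes into the same type of geometric integral estimated in Theorems \ref{thm:expt} and \ref{thm:covar}: after the substitution $x\mapsto \d_t x$ centering at one fixed vertex, every free (singleton) variable contributes a factor $t\d_t^d$ and the volume powers $\D_{\d_t}^{\a}$ contribute factors $\d_t^{\a}$, while the bound $\D\le\prod\|x_i\|$ keeps all integrals finite for $\a>-\tfrac d2$. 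The number of integration variables in such a term is $|\s|$, which for a partition joining the four blocks of size $k+1$ with $s(\s)=(k{+}1{-}i,k{+}1{-}i,k{+}1{-}j,k{+}1{-}j)$ turns out to behave like $t^{|\s|}\d_t^{d(|\s|-1)}$ times the appropriate power $\d_t^{4\a k}$ of the volume exponents. Carrying out this bookkeeping one sees that, up to constants depending only on $k$ and $\a$, $M_{ij}(f,f)$ is of order $t^{4(k+1)-(i+j)}\d_t^{4\a k}(t\d_t^d)^{-\text{(coincidences)}}$; taking square roots and summing over $i,j$ the dominant term corresponds to the maximal amount of overlap.

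Next I would compare this against $\Var\,\cV_k^{(\a)}$. From the variance formula in Theorem \ref{thm:covar}, $\Var\,\cV_k^{(\a)}=\Theta\!\big(\max_{1\le m\le k+1} t^{2k-m+2}\d_t^{2(d+\a)k-d(m-1)}\big)=\Theta(Q_k^2)$ with $Q_k$ as in \eqref{def:Qi}. Thus $\Var\,\cV_k^{(\a)}=\Theta\big(t\,\d_t^{2\a k}\max\{(t\d_t^d)^{2k},(t\d_t^d)^{k}\}\big)$, where the first term dominates in the dense regime and the second in the sparse regime. Dividing the bound on $\sqrt{M_{ij}(f,f)}$ by this variance and optimizing over $i,j$, one finds that the ratio is governed by a single worst-case partition and collapses to $t^{-1/2}\max\{(t\d_t^d)^{-k/2},1\}$, again with the two branches corresponding to the sparse and dense regimes. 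Summing over the finitely many pairs $i,j\in\{1,\dots,k+1\}$ only changes the constant, producing $c_{k,\a}$.

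The main obstacle will be the careful combinatorial estimate of $M_{ij}(f,f)$: one must track, over all admissible partitions, both the number of surviving integration variables (which governs the power of $t$ and of $t\d_t^d$) and the fact that each factor $\D_{\d_t}^{\a}$ may be negative-exponent, so the integrability must be verified uniformly using $\D\le\prod\|x_i\|$ and the condition $\a>-\tfrac d2$. Once the correct exponent of $t\d_t^d$ is identified for the extremal partition, comparison with $\Var\,\cV_k^{(\a)}=\Theta(Q_k^2)$ is routine and the two regimes assemble into the single maximum $\max\{(t\d_t^d)^{-k/2},1\}$, which I would present as the final bound with the constant $c_{k,\a}$ absorbing the $k$-dependence of Theorem \ref{thm:uni-clt} and the partition count.
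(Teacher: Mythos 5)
Your proposal is correct and follows essentially the same route as the paper's proof: identify $\cV_k^{(\a)}$ as an absolutely convergent Poisson U-statistic of order $k+1$, bound each $M_{ij}$ using the scaling $x\mapsto\d_t x$ together with the locality forced by the linked-partition structure (all variables within distance $4\d_t$ of a fixed vertex, so each term is at most $c\,t^{|\s|}\d_t^{4k\a}\d_t^{d(|\s|-1)}$), and divide by the variance lower bound $\Var\,\cV_k^{(\a)}\geq c\,t\,\d_t^{2\a k}\max\{(t\d_t^d)^{k},(t\d_t^d)^{2k}\}$ from Theorem \ref{thm:covar}. The only cosmetic difference is that the paper sidesteps your per-$(i,j)$ exponent bookkeeping by using just the crude bounds $k+1\leq|\s|\leq 4k+1$ and $\D_{\d_t}^{\a}\leq\k_k^{\a}\d_t^{k\a}$, which immediately yield $M_{ij}\leq c\,t\,\d_t^{4k\a}\max\{(t\d_t^d)^{k},(t\d_t^d)^{4k}\}$ and hence the stated rate.
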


Note that it was to be expected that a central limit theorem only holds if $\E f_k \to \infty$ which happens if $t (t \d_t^d)^k \to \infty$. It turns out that this is precisely the requirement in Theorem \ref{thm:uni-clt-V}.

In the case $\a=0$ Theorem \ref{thm:uni-clt-V} just gives a univariate central limit theorem for the number of facets. For the Kolmogorov distance this is already well known due to work by Penrose \cite{P03} although the central limit theorems there come without error term. In a recent paper by Lachi\'eze-Rey, Schulte and Yukich \cite{LSY19} the error terms for the thermodynamic regime and the dense regime have been obtained as a consequence of a much more general theorem for stabilizing functionals. For the Wasserstein distance a central limit theorem with error bounds is due to Decreusefond et al. \cite{DFRV_14}.

\begin{proof}
We apply Theorem \ref{thm:uni-clt}. 
${\cV}_k^{(\a)}$ is an absolutely convergent Poisson U-statistic of order $k+1$ with
$$ 
 f(x_0, \dots, x_{k})= 
\frac{1}{(k+1)!} \D_{\d_t} [x_0, \dots, x_{k}]^{\a}  .
 $$
We have to show that the functionals $M_{ij}$ tend to zero. In our case the summands in $M_{ij}$ take the form
$$
t^{|\s |} \int\limits_{W^{|\s|}} 
\left|
( \D_{\d_t}[\cdot ]^{\a} \otimes \D_{\d_t}[ \cdot]^{\a} \otimes \D_{\d_t}[\cdot]^{\a}  \otimes \D_{\d_t}[ \cdot ]^{\a}  )_{\s}
\right| 
\ dx_0 \dots dx_{|\s|-1}
 $$
where the functionals depend on simplices of dimension $k$. 
The essential feature in the definition of $\s$ is that all four functionals  $\D_{\d_t}[ \cdot]  $ are linked by common variables, and each of these functionals depends on $k+1$ variables. First, for the number $|\s|$ of variables  this implies 
\begin{equation}\label{eq:boundsigma}
k +1 \leq |\s| \leq 4k +1.
\end{equation}
Second, assuming w.l.o.g. that $x_0$ occurs in the first functional, e.g. $\D_{\d_t}[ \cdot]=\D_{\d_t}[\{ \cdot\}_0^{k}]$, all other variables in this first function are at most at distance $\d_t$, in the functional directly linked to the first one by at most $2 \d_t$, etc. Thus
$$ \max \| x_i-x_0\| \leq 4 \d_t $$
if the integrand is not vanishing.
Further, $ \D_{\d_t}^{\a} \leq \k_{k}^{\a} \d_t^{k \a} . $ 
Hence 
\begin{align*}
t^{|\s |} 
&
\int\limits_{W^{|\s|}} 
\left| ( \D_{\d_t}[\cdot ]^{\a_l} \otimes \D_{\d_t}[ \cdot]^{\a_l} \otimes \D_{\d_t}[\cdot]^{\a_m}  \otimes \D_{\d_t}[ \cdot ]^{\a_m}  )_{\s}
\right|
\ dx_0 \dots dx_{|\s|-1}
\\ & \leq 
c_{k,\a} t^{|\s |} (\d_t)^{4 k \a }
\int\limits_{W^{|\s|}} 
\1( \forall i\colon \| x_i - x_0 \| \leq 4\d_t)
\ dx_0 \dots dx_{|\s|-1}
\\ & \leq 
c_{k,\a} t^{|\s |} (\d_t)^{4 k \a }
(4\d_t)^{d (|\s|-1)}  .
\end{align*}
By \eqref{eq:boundsigma} this implies
\begin{equation*}
M_{ij} ( \D_{\d_t}[\cdot]^{\a}, \D_{\d_t}[\cdot]^{\a} )
\leq 
c_{k,\a} t \, \d_t^{4 k \a} \max \{ (t \d_t^d)^{k},  (t \d_t^d)^{4k} \}
.  \end{equation*}
Next we use the variance asymptotics from Theorem \ref{thm:covar}. They imply 
\begin{eqnarray*}
\Var\,\cV_{k}^{(\a)}
& \geq &
c_{k,\a} t \d_t^{2\a k  }  
\max\{  (t\d_t^d)^{k } , (t \d_t^d)^{2 k  }  \}  
\end{eqnarray*}
for $\d_t$ sufficiently small. 
This shows
\begin{align*}
\frac{\sqrt{M_{ij} ( \D_{\d_t}[\cdot]^{\a}, \D_{\d_t}[\cdot]^{\a} )} }{\Var \,\cV_{k}^{(\a)} }
& \leq 
c_{k,\a} t^{- \frac 12} 
\max \{ (t \d_t^d)^{ - \frac k2}, 1\} 
.  \end{align*}
Summing over all $M_{ij}$  gives the desired result.
\end{proof}

\subsection{Multivariate Central Limit Theorem}
In the following theorem we assume that $(k_1, \a_1)$, \dots, $(k_n, \a_n)$ is an admissible sequence.
Define the normalized volume-power functionals by 
$\widehat{\cV}_{k_i}^{(\a_i)}= \cV_{k_i}^{(\a_i)}/Q_{i}$ 
with $Q_i $ defined in \eqref{def:Qi}.
 \begin{theorem}\label{thm:mult-clt-V}
 Let $\bm{V_k^{(\a)}}=(\widehat{\cV}_{k_1}^{(\a)},\ldots,\widehat{\cV}_{k_n}^{(\a)})$, and let $\bm{N}(\Sigma_t)$ be the $n$-dimensional centered Gaussian random vector with covariance matrix 
 $$
 \Sigma_t=(\s_{l,m})_{lm}
\mbox{ \ with \ }
\s_{l,m} =
\Cov (\widehat{\cV}_{k_l}^{(\a_l)},\widehat{\cV}_{k_m}^{(\a_m)})
 .$$
 Then there is a constant $c_{\bm k, \bm \a} $ such that 
\begin{eqnarray*} 
d_3\bigg(\bm{V_k^{(\a)}}-\E\bm{V_k^{(\a)}},\bm{N}(\Sigma_t)\bigg)
&\leq&
c_{\bm k, \bm \a}  t^{- \frac 12} \,  \max \{ 1, (t \d_t^d)^{-\frac 12 k_n} \} .
\end{eqnarray*}
 \end{theorem}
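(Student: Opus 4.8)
The plan is to apply the multivariate central limit theorem for Poisson $U$-statistics, Theorem~\ref{thm:mlt}, to the vector $\bm{V_k^{(\a)}}$ whose components $\widehat{\cV}_{k_l}^{(\a_l)}$ are absolutely convergent $U$-statistics of order $k_l+1$ with kernels
$$
f^{(l)}(x_0,\dots,x_{k_l})=\frac{1}{(k_l+1)!\,Q_l}\,\D_{\d_t}[x_0,\dots,x_{k_l}]^{\a_l}.
$$
Choosing $\Sigma=\Sigma_t$, i.e. the exact covariance matrix with entries $\s_{lm}=\Cov(\widehat{\cV}_{k_l}^{(\a_l)},\widehat{\cV}_{k_m}^{(\a_m)})$, makes the first sum in Theorem~\ref{thm:mlt} vanish identically. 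It then remains to control the second term, i.e. the prefactor $\sum_l\sqrt{\Var\widehat{\cV}_{k_l}^{(\a_l)}}+1$ together with the quantities $\sqrt{M_{ij}(f^{(l)},f^{(m)})}$.

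The prefactor is harmless: by Theorem~\ref{thm:covar-normalized} each normalized variance $\Var\widehat{\cV}_{k_l}^{(\a_l)}$ converges to a finite limit in every regime, so $\sum_l\sqrt{\Var\widehat{\cV}_{k_l}^{(\a_l)}}+1=O(1)$. The substance of the proof is therefore the estimate of $M_{ij}(f^{(l)},f^{(m)})$. Since $M_{ij}$ is quadratic in each of its two arguments, the normalizing constants factor out and it suffices to bound the unnormalized mixed fourth moment $M_{ij}(\D_{\d_t}[\cdot]^{\a_l},\D_{\d_t}[\cdot]^{\a_m})$ and divide by $Q_l^2Q_m^2$. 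Following the geometric argument of the proof of Theorem~\ref{thm:uni-clt-V}, the four simplices entering $M_{ij}$ are chained through their common variables, so a nonvanishing integrand forces all points into a ball of radius $O(\d_t)$ about any fixed anchor vertex. Substituting $x=x_0+\d_t y$ as in Theorems~\ref{thm:expt} and~\ref{thm:covar} turns each summand into $t^{|\s|}\,\d_t^{\,d(|\s|-1)+2k_l\a_l+2k_m\a_m}$ times a rescaled $\D_1$-moment, where the number of distinct variables satisfies
$$
k_m+1\le|\s|\le 2k_l+2k_m+1 .
$$
Here (recall $k_l\le k_m$) the lower bound $k_m+1$ is attained when the two $k_l$-simplices are nested inside the completely overlapping pair of $k_m$-simplices, and the upper bound corresponds to a tree-like connection of the four building blocks. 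Taking the extreme values of $(t\d_t^d)^{|\s|-1}$ yields
$$
M_{ij}\big(\D_{\d_t}[\cdot]^{\a_l},\D_{\d_t}[\cdot]^{\a_m}\big)
\le
c\, t\, \d_t^{\,2k_l\a_l+2k_m\a_m}\max\{(t\d_t^d)^{k_m},(t\d_t^d)^{2k_l+2k_m}\}.
$$

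Dividing by $Q_l^2Q_m^2$ and using the form \eqref{eq:Ql} of $Q_i$, the powers $\d_t^{\,2k_l\a_l+2k_m\a_m}$ cancel exactly, leaving a ratio of maxima in $u:=t\d_t^d$. For $u\ge1$ numerator and denominator both equal $u^{2k_l+2k_m}$, so $M_{ij}(f^{(l)},f^{(m)})\le c\,t^{-1}$; for $u\le1$ the ratio is $u^{k_m}/u^{k_l+k_m}=u^{-k_l}$, so $M_{ij}(f^{(l)},f^{(m)})\le c\,t^{-1}(t\d_t^d)^{-k_l}$. In both cases
$$
\sqrt{M_{ij}(f^{(l)},f^{(m)})}\le c\, t^{-\frac12}\max\{1,(t\d_t^d)^{-\frac12 k_l}\}\le c\, t^{-\frac12}\max\{1,(t\d_t^d)^{-\frac12 k_n}\},
$$
using $k_l\le k_n$. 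Summing over the finitely many indices $i,j,l,m$ and multiplying by the $O(1)$ prefactor of Theorem~\ref{thm:mlt} gives the claimed bound.

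The step I expect to be most delicate is the moment estimate for negative exponents. For $\a\ge0$ one may simply bound $\D_{\d_t}^{\a}\le c\,\d_t^{k\a}$ pointwise as in the univariate proof, but when some $\a_i<0$ this inequality fails and one must instead control the integrability of the rescaled $\D_1$-moments near degenerate simplices, where the factors $\D^{\a_i}$ become singular. This is precisely where the admissibility conditions on the exponents enter, ensuring --- in the same spirit as the finiteness of the mixed moments $\m_{k_1,k_2:m}^{(\a_1,\a_2)}$ --- that every rescaled fourth-order integral is finite and bounded uniformly in $t$; simultaneously, keeping careful track of the admissible range of $|\s|$, in particular the value $|\s|=k_m+1$ coming from the fully nested configuration, is what produces the correct exponent $k_n$ in the sparse-regime part of the rate.
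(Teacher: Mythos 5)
Your proposal follows the paper's proof essentially verbatim: you apply Theorem~\ref{thm:mlt} with $\Sigma=\Sigma_t$ so that the first term vanishes, bound the prefactor via the variance asymptotics of Theorem~\ref{thm:covar-normalized}, estimate each $M_{ij}$ by localizing all variables in a ball of radius $O(\d_t)$ around an anchor vertex and using $k_m+1\leq|\s|\leq 2(k_l+k_m)+1$, and then divide by $Q_l^2Q_m^2$ with the same two-case analysis in $t\d_t^d$, arriving at exactly the paper's bound $M_{ij}\leq c\,t^{-1}\max\{1,(t\d_t^d)^{-k_l}\}$ and hence the stated rate. The one step you single out as delicate --- that the pointwise bound $\D_{\d_t}^{\a}\leq c\,\d_t^{k\a}$ fails for negative $\a$ --- is in fact glossed over in the paper's own proof, which applies that bound without restricting to $\a\geq 0$, so your proposal is if anything more candid than the original on this point.
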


Thus in the dense and thermodynamic regime a central limit theorem holds with rate of convergence $t^{-\frac12}$ which most probably is optimal.
In the sparse regime, where $t \d_t^d \to 0$, the rate of convergence is 
$$ t^{- \frac 12} \, (t \d_t^d)^{-\frac 12 k_n} = \Theta( (\E f_{k_n}(\cR(\eta_t,\d_t)))^{- \frac 12}) , $$
and thus there is a multivariate central limit theorem as long as the expectation $\E f_k$ tend to infinity for all $k \in \bm k$. To the best of our knowledge Theorem \ref{thm:mult-clt-V} is new even in the case $\a_i=0$ where we obtain a central limit theorem for the $\bf f$-vector of the Vietoris-Rips complex and for the \v{C}ech complex.

In the view of the first term on the RHS in Theorem~\ref{thm:mlt} it is of interest to state the difference between $\S_t$ and the limiting covariance matrix $\S$ given in \eqref{eq:lim-cov-matrix}. By equations \eqref{eq:error-cov-sparse1}, \eqref{eq:error-cov-sparse2}, \eqref{eq:error-cov-dense}, and \eqref{eq:error-cov-thermo} we see that in the sparse case
$$
\frac{1}{2}\sum\limits_{l,m=1}^{n}|\sigma_{lm}-\Cov (\widehat{\cV}_{k_l}^{(\a_l)},\widehat{\cV}_{k_m}^{(\a_m)})|
\leq O(\d_t + t \d_t^d), 
$$
that in the thermodynamic regime this error term is of order
$$ O(\d_t +(c- t\d_t^d)) , $$
and in the dense regime of order
$$ O(\d_t + (t \d_t^d)^{-1}) . $$
Thus the $d_3$-distance 
$ d_3 (\bm{V_k^{(\a)}}-\E\bm{V_k^{(\a)}},\bm{N}(\Sigma) ) $
would have this additional error terms.

\begin{proof}
By definition the first term on the RHS in Theorem~\ref{thm:mlt} vanishes. And by Theorem \ref{thm:covar-normalized} the variance $\Var \widehat{\cV}_{k_l}^{(\a_l)}$ tends to a constant. Hence we just have to show that the functionals $M_{ij}$ tend to zero. In our case the summands in $M_{ij}$ take the form
$$
t^{|\s |} \int\limits_{W^{|\s|}} 
\frac{\left|
( \D_{\d_t}[\cdot ]^{\a_l} \otimes \D_{\d_t}[ \cdot]^{\a_l} \otimes \D_{\d_t}[\cdot]^{\a_m}  \otimes \D_{\d_t}[ \cdot ]^{\a_m}  )_{\s}
\right|}  {Q_l^2 Q_m^2}
\ dx_0 \dots dx_{|\s|-1}
 $$
where the first two functionals depend on simplices of volume $k_l$ and the other two on simplices of dimension $k_m$. Assume from now on that $k_l \leq k_m$. 
 The essential feature in the definition of $\s$ is that all four functionals  $\D_{\d_t}[ \cdot]  $ are linked by common variables, and each of these functionals depends on $k_l+1$, resp. $k_m+1$ variables. First, for the number $|\s|$ of variables  this implies 
 $$
\max \{ k_l, k_m\} +1 =k_m +1\leq |\s| \leq 2(k_l+k_m) +1
 $$
 since $k_l \leq k_m$. Second, assuming w.l.o.g. that $x_0$ occurs in the first functional, $\D_{\d_t}[ \cdot]=\D_{\d_t}[\{ \cdot\}_0^{k_l}]$, all other variables in this first function are at most at distance $\d_t$, in the functional directly linked to the first one by at most $2 \d_t$, etc. Thus
$$ \max \| x_i-x_0\| \leq 4 \d_t $$
if the integrand is not vanishing.
Further, 
$$ \D_{\d_t}^{\a_l} \leq \k_{k_l}^{\a_l} \d_t^{k_l \a_l}
\ \mbox{ and  }\ 
\D_{\d_t}^{\a_m} \leq \k_{k_m} \d_t^{k_m \a_m}  
. $$ 
Hence 
\begin{align*}
t^{|\s |} 
&
\int\limits_{W^{|\s|}} 
\frac{\left|
( \D_{\d_t}[\cdot ]^{\a_l} \otimes \D_{\d_t}[ \cdot]^{\a_l} \otimes \D_{\d_t}[\cdot]^{\a_m}  \otimes \D_{\d_t}[ \cdot ]^{\a_m}  )_{\s}
\right|}  {Q_l^2 Q_m^2}
\ dx_0 \dots dx_{|\s|-1}
\\ & \leq 
c_{k_l, \a_l, k_m, \a_l} t^{|\s |} 
\frac{ \d_t^{2(k_l \a_l  + k_m \a_m)} }  {Q_l^2 Q_m^2} \, 
(4 \d_t)^{d(|\s|-1)} 
\\ & \leq 
c_{k_l, \a_l, k_m, \a_l} t \, 
\frac{ \d_t^{2(k_l \a_l  + k_m \a_m)} }  {Q_l^2 Q_m^2} \, 
\max \{ (t \d_t^d)^{k_m},  (t \d_t^d)^{2(k_l+k_m)} \}
\end{align*}
which implies
\begin{equation*}
M_{ij} \left( \frac{\D_{\d_t}[\cdot]^{\a_l}}{Q_l}, \frac{\D_{\d_t}[\cdot]^{\a_m}}{Q_m} \right)
\leq 
c_{\bm k, \bm \a} t \, 
\frac{ \d_t^{2(k_l \a_l  + k_m \a_m)} }  {Q_l^2 Q_m^2} \, 
\max \{ (t \d_t^d)^{k_m},  (t \d_t^d)^{2(k_l+k_m)} \}
.  \end{equation*}
Plugging the definition \eqref{def:Qi} of $Q_i$ into this shows
\begin{align*}
M_{ij} \left( \frac{\D_{\d_t}[\cdot]^{\a_l}}{Q_l}, \frac{\D_{\d_t}[\cdot]^{\a_m}}{Q_m} \right)
& \leq 
c_{\bm k, \bm \a} t^{-1} \, 
\frac{ \max \{ (t \d_t^d)^{k_m},  (t \d_t^d)^{2(k_l+k_m)} \}}  
{ \max \{ (t \d_t^d)^{k_l +k_m}   , (t \d_t^d)^{2(k_l+k_m )}  \}  } \, 
\\ & =
c_{\bm k, \bm \a}   t^{-1}  \max \{ 1, (t \d_t^d)^{-k_l} \}   
.  \end{align*}
The sum over all $M_{ij}$ yields
\begin{eqnarray*}
\sum\limits_{l,m=1}^{n} \sum\limits_{i=1}^{k_l}\sum\limits_{j=1}^{k_m} 
k_l^{\frac{7}{2}}\sqrt{M_{ij}(f^{(l)},f^{(m)})}
 \leq 
c_{\bm k, \bm \a} t^{- \frac 12} \,  \max \{ 1, (t \d_t^d)^{-\frac 12 k_n} \} 
.  
\end{eqnarray*}

\end{proof}

\section{The \v{C}ech complex}\label{sec:Cech}
It is immediate from the definition of the Vietoris-Rips complex and the \v{C}ech complex that 
$$\cC(\eta_t, \d_t) \subset \cR (\eta_t, \d_t) \subset \cC(\eta_t, (\tfrac {2d}{d+1})^{\frac 12}\  \d_t) .
$$
Hence all bounds obtained for the Vietoris-Rips complex hold true for the \v{C}ech complex with constants changed by a factor of $(\tfrac {2d}{d+1})^{\frac 12}$. The constants in the expectation and covariance change in the follwing way. Denote by $\D^c_{s} [x_0, \dots x_k]$ the $k$-dimensional volume of the convex hull of the points $x_0, \dots, x_k$ if the intersection $\bigcap_1^k B^d (x_i, \tfrac s2 )$ is not empty, and set $\D^c_{s} [x_0, \dots, x_k]=0$ otherwise. In the case $k >d$ we only define $\D^c_{s} [x_0, \dots x_k]^0=1 $ if the intersection property holds. Thus for all $k \geq 0$,
$$ 
F \in \cC_k (\cR (\eta_t, \d_t)) \Leftrightarrow 
\D^c_{\d_t}(F)^0=1  .
$$
We define 
\begin{equation*}
\n_{k}^{(\a)} = 
\int\limits_{(B^d)^k}\D^c_{1} [0,\{x_l\}_{l=1}^k]^{\a}  \ \dm x_1\cdots\dm x_k
\end{equation*}
with $\n_0^ {(\a)}=1$. In the case $k >d$ the definition only applies to $\a=0$. Again $\a>-d$ ensures that $\n_k^{(\a)} < \infty$.
The volume-power functional of the \v{C}ech complex is given by
$$
\cU_k^{(\a)}
=
\frac{1}{(k+1)!}\sum_{(x_0, \dots, x_k) \in \eta_{t, \neq}^k} \D^c_{\d_t}[x_0, \dots, x_k]^{\a}  .
$$
Then the \v{C}ech complex version of Theorem \ref{thm:expt} holds for $\cU_k^{(\a)}$  with $\m_k^{(\a)} $ replaced by $\n_k^{(\a)} $.
Analogously, define for $k_1 \leq k_2$
\begin{align*}
\n_{k_1, k_2:m}^{(\a_1, \a_2)}
=& 
\int\limits_{(B^d)^{k_1+k_2+1-m}}
\D^c_{1} [0,\{x_l\}_{l=1}^{k_1}]^{\a_1} 
\D^c_{1} [0, \{x_l\}_{l=k_1-m+2}^{k_1+k_2-m+1}]^{\a_2}
\\ & \hskip6cm  \nonumber
\dm x_1\cdots \dm x_{k_1+k_2-m+1} 
\end{align*}
and by symmetry $\n_{k_1, k_2:m}^{(\a_1, \a_2)} = \n_{k_2, k_1:m}^{(\a_2, \a_1)} $. Then Theorem \ref{thm:covar}, Theorem \ref{thm:covar-normalized} and Theorem \ref{thm:covar-matrix} hold for $\cU_{k_i}^{(\a_i)}$ with a covariance matrix $\S^c$  with $\m_{k_1, k_2:m}^{(\a_1, \a_2)}$ replaced by $\n_{k_1, k_2:m}^{(\a_1, \a_2)}$.
Finally, the proofs of the central limit theorems only depend on the local behavior of the random simplicial complexes and thus the identical proof holds for the \v{C}ech complex. Define the normalized volume-power functionals by 
$$\widehat{\cU}_{k_i}^{(\a_i)}=\frac{1}{Q_{i}} \cU_{k_i}^{(\a_i)} $$ 
with $Q_i $ defined in \eqref{def:Qi}, and let $\bm{U_k^{(\a)}}=(\widehat{\cU}_{k_1}^{(\a)},\ldots,\widehat{\cU}_{k_n}^{(\a)})$.
Assume that $(k_1, \a_1)$, \dots, $(k_n, \a_n)$ is an admissible sequence.
\begin{theorem}\label{thm:uni-mult-clt-C}
For $d_{\star}= d_W$ or $d_{\star}=d_K$ there is a constant $c_k$ such that 
$$
d_{\star} \bigg(\frac{{\cU}_k^{\a}-\E {\cU}_k^{\a}}{\sqrt{\Var\,{\cU}_k^{\a}}}, N\bigg)
\leq 
c_k  t^{- \frac 12} \max\{( t \d_t^d)^{- \frac k2 } , 1\} . 
$$
And there is a constant $c_{\bm k, \bm \a}$ such that 
\begin{eqnarray*} 
d_3\bigg(\bm{U_k^{(\a)}}-\E\bm{U_k^{(\a)}},\bm{N}(\Sigma^c_t)\bigg)
&\leq&
c_{\bm k,\bm \a} t^{- \frac 12} \,  \max \{ 1, (t \d_t^d)^{-\frac 12 k_n} \} 
\end{eqnarray*}
where $\Sigma^c_t$ is the covariance matrix 
$
 \S^c_t=( \Cov (\widehat{\cU}_{k_l}^{(\a_l)},\widehat{\cU}_{k_m}^{(\a_m)}) )_{lm}
$.
 \end{theorem}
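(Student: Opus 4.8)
The plan is to re-use the two central limit theorems already proven for the Vietoris-Rips complex, Theorem~\ref{thm:uni-clt-V} and Theorem~\ref{thm:mult-clt-V}, observing that their proofs exploit only three features of the kernel $\D_{\d_t}[\cdot]^\a$: that $\cV_k^{(\a)}$ is an absolutely convergent Poisson U-statistic of order $k+1$; a locality property together with a volume bound, which together control the functionals $M_{ij}$; and the variance and covariance asymptotics of Theorem~\ref{thm:covar} and Theorem~\ref{thm:covar-normalized}. I would verify that each of these has a verbatim analogue for the \v{C}ech kernel $\D^c_{\d_t}[\cdot]^\a$, so that the two claimed bounds follow from Theorem~\ref{thm:uni-clt} and Theorem~\ref{thm:mlt} exactly as before.

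The decisive point is a pointwise domination. If $\{x_0,\dots,x_k\}$ is a \v{C}ech face for the parameter $\d_t$, there is a common point $y\in\bigcap_{i=0}^k B^d(x_i,\d_t/2)$, whence $\|x_i-x_j\|\leq\|x_i-y\|+\|y-x_j\|\leq\d_t$ for all $i,j$; thus the support of $\D^c_{\d_t}$ is contained in that of $\D_{\d_t}$, and on this smaller support the two functionals agree, being the same convex-hull volume. Consequently $0\leq\D^c_{\d_t}[\cdot]^\a\leq\D_{\d_t}[\cdot]^\a$ for every admissible $\a$, which carries over factor by factor to the contracted integrands appearing in $M_{ij}$ and yields $M_{ij}(\D^c_{\d_t}[\cdot]^\a,\D^c_{\d_t}[\cdot]^\a)\leq M_{ij}(\D_{\d_t}[\cdot]^\a,\D_{\d_t}[\cdot]^\a)$. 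Hence all the upper bounds on the numerators established inside the proofs of Theorem~\ref{thm:uni-clt-V} and Theorem~\ref{thm:mult-clt-V} hold for the \v{C}ech functionals without any change; equivalently, one simply repeats the same chain-of-neighbours argument, which again gives $\max_i\|x_i-x_0\|\leq 4\d_t$ on the non-vanishing set and the same combinatorial range $k_m+1\leq|\s|\leq 2(k_l+k_m)+1$ for the number of integration variables.

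The only input that is genuinely specific to the complex is the denominator. Here I would invoke the \v{C}ech versions of Theorem~\ref{thm:covar} and Theorem~\ref{thm:covar-normalized} recorded in this section, in which $\m_{k_1,k_2:m}^{(\a_1,\a_2)}$ is replaced by $\n_{k_1,k_2:m}^{(\a_1,\a_2)}$. Under the same hypothesis $\a>-\tfrac d2$ the leading coefficients $\n_k^{(2\a)}$ (when $t\d_t^d\leq 1$) and $\n_{k,k:1}^{(\a,\a)}=(\n_k^{(\a)})^2$ (when $t\d_t^d\geq 1$) are strictly positive, so the same lower bound on $\Var\,\cU_k^{(\a)}$ as in the Rips case holds with a modified constant, and $\Var\,\widehat{\cU}_{k_l}^{(\a_l)}$ again converges to a positive constant. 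Feeding the numerator bounds and this variance into Theorem~\ref{thm:uni-clt} produces the univariate estimate, while for the multivariate estimate the first term in Theorem~\ref{thm:mlt} vanishes by the choice $\s_{l,m}=\Cov(\widehat{\cU}_{k_l}^{(\a_l)},\widehat{\cU}_{k_m}^{(\a_m)})$ in $\S^c_t$, leaving exactly the same $M_{ij}$-sum to be bounded as before.

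I do not expect a genuine obstacle: the content is entirely the verification that the Rips proofs used only local, shape-independent features of the kernel. The one step deserving care is the domination/support claim, since it is what guarantees that the \emph{exponents} of $t$ and $\d_t$, and not merely the constants, agree with the Rips case; this is precisely why one must check the diameter bound $\|x_i-x_j\|\leq\d_t$ directly rather than relying on the weaker sandwich inclusion $\cR(\eta_t,\d_t)\subset\cC(\eta_t,(\tfrac{2d}{d+1})^{1/2}\d_t)$, which would only match the orders up to a constant rescaling of $\d_t$.
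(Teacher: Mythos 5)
Your proposal is correct and follows essentially the same route as the paper, which likewise reduces Theorem~\ref{thm:uni-mult-clt-C} to the Vietoris-Rips proofs by noting that they use only the local support property (yielding $\max_i\|x_i-x_0\|\le 4\d_t$ in the $M_{ij}$ bounds) together with the \v{C}ech analogues of Theorems~\ref{thm:covar} and \ref{thm:covar-normalized} with $\m$ replaced by $\n$ for the variance lower bounds. Your explicit pointwise domination $\D^c_{\d_t}[\cdot]^{\a}\le\D_{\d_t}[\cdot]^{\a}$ (i.e.\ the inclusion $\cC(\eta_t,\d_t)\subset\cR(\eta_t,\d_t)$ at the \emph{same} parameter) is just a clean formalization of the paper's one-line remark that the identical proof applies, and you correctly identify that the variance asymptotics must be supplied separately since the domination runs the wrong way there.
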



\begin{thebibliography}{1}

\bibitem{Carl_2009}
G.~Carlsson.
\emph{Topology and data}.
Bull. Amer. Math. Soc. (N.S.) {\bf 46}, 255--308 (2009).
%

\bibitem{DFRV_14}
L. Decreusefond, E. Ferraz, H. Randriam, A. Vergne.
\emph{Simplicial Homology of Random Configurations.}
Adv. in Appl. Probab. {\bf 46}, 1--20 (2014).

\bibitem{Ghrist_2008}
R.~Ghrist. 
\emph{Barcodes: the persistent topology of data}.
Bull. Amer. Math. Soc. (N.S.) {\bf 45}, 61--75 (2008).

\bibitem{LP_2013a}
R. Lachi\'eze-Rey, G. Peccati. 
\emph{Fine Gaussian fluctuations on the Poisson space II: rescaled kernels, marked processes and geometric U-statistics.}
Stochastic Process. Appl. {\bf 123}, 4186--4218  (2013).

\bibitem{LP_2013b}
R. Lachi\'eze-Rey, G. Peccati. 
\emph{Fine Gaussian fluctuations on the Poisson space, I: contractions, cumulants and geometric random graphs.}
Electron. J. Probab. {\bf 18}, no. 32 (2013). 

\bibitem{LSY19}
R.~Lachi\'eze-Rey, M.~Schulte, J.~E.~Yukich5
\emph{Normal approximation for stabilizing functionals.}
Ann. Appl. Probab. {\bf 29}, 931--993 (2019).

\bibitem{LNS_2019}
G. Last, F. Nestmann, M. Schulte.
\emph{The random connection model and functions of edge-marked Poisson processes:  second order properties and normal approximation.}
arXiv:1808.01203

\bibitem{LP-book}
G.~Last, M. D.~Penrose.
\emph{Lectures on the Poisson Process.}
Institute of Mathematical Statistics Textbooks {\bf 7}. Cambridge University Press, Cambridge (2018).

\bibitem{L_2009}
M.~Laurent.
\emph{Sums of squares, moment matrices and optimization over polynomials.}
In: M.~Putinar, S.~Sullivant (eds.). \emph{Emerging applications of algebraic geometry.} 
IMA Vol. Math. Appl.  {\bf 149}, 157--270, Springer, New York (2009).

\bibitem{PR16}
G.~Peccati, M.~Reitzner (eds.).
\emph{Stochastic Analysis for Poisson Point Processes.}
Bocconi University Press (2016).

\bibitem{P03}
M.D.~Penrose.
\emph{Random Geometric Graphs.}
Oxford University Press, Oxford (2003).


\bibitem{RS13}
M.~Reitzner, M.~Schulte.
\emph{Central limit theorems for U-statistic of Poisson point processes.}
 Ann. Probab. {\bf 41}, 3879--3909 (2013).

\bibitem{RST16}
M.~Reitzner, M.~Schulte, C.~Th\"{a}le.
\emph{Limit theory for the Gilbert graph.}
 Adv. in Appl. Math. {\bf 88}, 26--61 (2017).

\bibitem{S13}
M.~Schulte.
\emph{Malliavin-Stein Method in Stochastic Geometry.}
PhD thesis, Universit\"at Osnabr\"uck (2013).

\bibitem{S16}
M.~Schulte.
\emph{Normal approximation of Poisson functionals in Kolmogorov distance.}
J. Theor. Probab. {\bf 29}, 96--117 (2016).

\bibitem{SW08}
R.~Schneider, W,~Weil.
\emph{Stochastic and Integral Geometry.}
Springer, Berlin, (2008).


\end{thebibliography}
\end{document}